\newtheorem{theorem}{Theorem}[section]
\newtheorem{corollary}[theorem]{Corollary}
\newtheorem{lemma}[theorem]{Lemma}
\newtheorem{proposition}[theorem]{Proposition}
\newtheorem{definition}[theorem]{Definition}
\theoremstyle{definition}
\newtheorem{remark}[theorem]{Remark}
\newtheorem{example}[theorem]{Example}
\newtheorem{notation}[theorem]{Notation}
\numberwithin{equation}{section}
\DeclareMathOperator{\reg}{reg}
\DeclareMathOperator{\ini}{in}
\DeclareMathOperator{\hp}{P}
\newcommand{\mcC}{\mathcal{C}}
\title{Ideals generated by corner-interval minors}
\author[Marie Amalore Nambi]{Marie Amalore Nambi}
\address{Sabanci University, Faculty of Engineering and Natural Sciences, Orta Mahalle, Tuzla, 34956, Istanbul, Turkey}
\email{amalore.p@gmail.com, amalore.pushparaj@sabanciuniv.edu}
\subjclass[2020]{{Primary 13F65, 13F70, 13D40}; Secondary {05E40}} 
\keywords{Binomial ideals, corner minors, corner-interval minors, Castelnuovo-Mumford regularity.}
\date{}
\begin{document}

\begin{abstract}
     In this article, we study binomial ideals generated by an arbitrary collection of corner-interval $2$-minors of a generic matrix. We determine the minimal prime ideals of such ideals and characterize their radicality in the special case of corner minors. Moreover, we discuss connectivity properties of contingency tables in algebraic statistics. We compute the Hilbert-Poincar\'e polynomial of the ideal generated by the set of all corner-interval minors and we derive the formula for the regularity in the case of corner minors. 
\end{abstract}

\maketitle

\section{Introduction}

Let $M=(x_{ij})_{i=1,\ldots,m \atop j=1,\ldots,n}$ be an $m \times n$ matrix of indeterminates over a field $K$. The ideals generated by all $r$-minors of $M$, known as determinantal ideals of $M$, have been extensively studied from various perspectives, see the lecture notes and survey \cite{BV1988, BC2003}. In recent past, motivated by applications in algebraic statistics, more specifically in the study of conditional independence ideals and the connectedness of contingency tables, researchers have been interested in the study of ideals generated by arbitrary subsets of $2$-minors of $M$, see \cite{DES1998, HHHKR2010}.

  \vspace{2mm}

In combinatorial commutative algebra, when $2$-minors are associated with combinatorial objects, researchers are interested in studying the algebraic properties of the corresponding ideals through their underlying combinatorial structures. In this context, Herzog et al. \cite{HHHKR2010} and independently Ohtani \cite{O2011} introduced the notion of binomial edge ideal corresponding to a finite simple graph. The binomial edge ideal of a graph can be visualized as the ideal generated by some set of $2$-minors of a $2\times n$ matrix. The authors showed that the binomial edge ideals of graphs are radical. Moreover, the authors characterized its primary decomposition. However, even when the matrix is extended to size $3 \times n$, our understanding is limited, even a simple example shows that there are not radical.

  \vspace{2mm}

In this paper, we focus on the ideal generated by $2$-minors that arise from an arbitrary subset of corner-interval minors.  A corner-interval minor of $M$ is a $2$-minor of the form $[a_1,a_2|1,b_2]$, where $a_1,a_2 \in [m]$ and $b_2 \in [n]$. This includes corner minors, a class of $2$-minors introduced by Diaconis, Eisenbud, and Sturmfels in \cite{DES1998}. This class of ideals can be viewed as a natural generalization of the generalized binomial edge ideal of a star graph.

  \vspace{2mm}

Rauh \cite{R2013} introduced the generalized binomial edge ideal of graphs, while Ene et al. \cite{EHHQ2014} extended it by introducing the binomial edge ideal of a pair of graphs. Both of these classes of ideals are natural extensions of binomial edge ideals, generalizing the setting from a $2 \times n$ matrix to an arbitrary $m \times n$ matrix. Moreover, the authors studied Gr\"obner basis and characterzied radical and minimal primes. The significance of studying the generalized binomial edge ideal of graphs lies in its applications to algebraic statistics, see \cite{R2013}. The ideal generated by adjacent $2$-minors of a matix $M$ was introduced in \cite{DES1998} and their algebraic properties such as prime, radical and minimal prime ideals are studied in \cite{HS2004, HH2012}. The ideal generated by all adjacent $2$-minors of $M$ forms a lattice basis ideal. Moreover, the lattice ideal associated with this basis is the ideal generated by all $2$-minors of $M$ (cf. \cite{ES1996}). The class of ideals of diagonal $2$-minors of a $n \times n$ matrix is studied in \cite{EQ2013}.  Qureshi \cite{Q2012} associated a combinatorial object known as a polyomino with the ideal generated by its inner $2$-minors, referring to the resulting ideal as a polyomino ideal. The author proved that the ideals associated with convex polyominoes are prime. The characterization of the primality and radicality of the polyomino ideals is still open. The primary objective of this article is to study the minimal prime ideals and radicality of ideals generated by arbitrary collections of corner-interval minors of $M$.

\vspace{2mm}

 Let $\mcC$ be an arbitrary subset of corner-interval minors. The ideal  $$I(\mcC) =(x_{a_1b_1}x_{a_2b_2}-x_{a_1b_2}x_{a_2b_1} \mid \text{ for every } [a_1,a_2|b_1,b_2] \in \mathcal{C}) \subset S=K[x_{ij} \mid x_{ij}\in V(\mcC)]$$ is called the binomial ideal of $\mcC$. One effective approach to studying the primality of the ideal $I(\mcC)$ is through its relationship with a toric ideal. In Section \ref{sec.min},  we construct a bipartite graph associated with $\mcC$ and compare the ideal $I(\mcC)$ with the toric ideal of the edge ring of this graph. It turns out that the toric ideal is a minimal prime of $I(\mcC)$ that does not contain any variables, see Lemma \ref{lemma.primenovar}. Moreover, we give an explicit description of the minimal prime ideals of $I(\mcC)$ by introducing admissible sets, a purely combinatorial way, see Theorem \ref{thm.minimal}. In Section \ref{sec.cor}, we study the special case where $\mcC$ is a collection of arbitrary corner minors. We provide a characterization of the radicality of the ideal $I(\mcC)$, see Corollary \ref{thm.rad}. Moreover, we obtain a decomposition of the ideal $I(\mcC)$, as shown in Proposition \ref{prop.decomp}. In the case where the ideal is radical, we further describe its primary decomposition. As an application, we describe when contingency tables are connected via corner minors of $\mcC$.

\vspace{2mm}

The second part of this article is dedicated to the study of the Hilbert-Poincar\'e series of the generalized binomial edge ideal of star graphs, as well as the Castelnuovo-Mumford regularity of the binomial edge ideal of a pair of star graphs. Several authors have found exact formulas or bounds for the regularity of the generalized binomial edge ideal of graphs, see \cite{AMS24, CI21, K2020, SZ25,  SK13}. However, the regularity of the binomial edge ideals of pair of graphs are not that much explored. In Section \ref{sec.bet}, we obtain the regularity of the binomial ideal generated by the set of all corner minors of $M$, see Theorem \ref{thm.regc}. In addition, we compute the Hilbert-Poincar\'e polynomial of the generalized binomial edge ideal of a star graph, see Theorem \ref{thm.hpp}. Finally, we obtain a lower bound for the regularity of powers of the ideal generated by an arbitrary set of $2$-minors of corner-intervals of $M$ in terms of its subcorner-interval minors.

\section{Preliminaries} \label{sec.pre}

In this section, we recall some notations, definitions, and results from graph theory and commutative algebra that will be used throughout the article.

\vspace{2mm}

Let $S=K[x_{ij} \mid i \in [m], j \in [n]]$ be a polynomial ring in $mn$ indeterminate over a field $K$. Let $M=(x_{ij})_{i=1,\ldots,m \atop j=1,\ldots,n}$ be an $m\times n$ matrix of indeterminates. For a $2$-minor $\delta = [a_1,a_2\mid b_1,b_2]$ of $M$, the variables $x_{a_1b_1},x_{a_1b_2},x_{a_2b_1}$, and $x_{a_2b_2}$ are called vertices of $\delta$, denoted by $V(\delta)$ and the sets $\{x_{a_1b_1},x_{a_1b_2}\}, \{x_{a_1b_1},x_{a_2b_1}\}$, $\{x_{a_1b_2},x_{a_2b_2}\}$, and $\{x_{a_2b_1},x_{a_2b_2}\}$ are called edges of the $2$-minor $\delta$. Let $\mcC$ be an arbitrary set of $2$-minors of $M$. The vertex set of $\mathcal{C}$, denoted by $V(\mathcal{C})$, is defined as the union of $V(\delta)$ for every $\delta$ in $\mathcal{C}$. The edge set of $\mathcal{C}$, denoted by $E(\mathcal{C})$, is the union of the edge sets of all $\delta$ in $\mathcal{C}$. For a subset $U\subset V(\mcC)$, we denote $\mcC \setminus U$ the collection of minors in $\mcC$ whose vertex sets are disjoint from $U$; that is, $\mcC\setminus U=\{\delta \in \mcC \mid V(\delta) \cap U = \emptyset\}$.

Let $\mathcal{C}$ be an arbitrary set of $2$-minors of $M$. Then the ideal $$I(\mathcal{C})=(f_{\delta}=x_{a_1b_1}x_{a_2b_2}-x_{a_1b_2}x_{a_2b_1} \mid \delta=[a_1,a_2|b_1,b_2] \in \mathcal{C}) \subset S$$ is called binomial ideal of $\mcC$. 

In the following, we recall the definition of corner minor of $M$, as introduced in \cite{DES1998}. 

\begin{definition}   (cf. \cite{DES1998})
    The $2$-minor $[a_1,a_2|b_1,b_2]$ of $M$ is called corner minor if $a_1=1$ and $b_1=1$.
\end{definition}
\begin{definition} 
    The $2$-minor $[a_1,a_2|b_1,b_2]$ of $M$ is called corner-interval minor if $b_1=1$. 
\end{definition}

\subsection{Graphs}
Let $G=(V(G) = [n],E(G))$ be a simple graph, where $[n]=\{1,\ldots,n\}$. A graph is called complete if $E(G)=\{ \{i,j\} \mid 1 \leq i <j\leq n\}$. A graph is said to be a cycle if $E(G)=\{ \{1,n\} \cup \{i,i+1\} \mid 1 \leq i \leq n-1\}\}$, for $n \geq 3$. A graph is called chordal if every cycle of length at least four has a chord. A graph is said to be cochordal if its complement is chordal. A graph $G$ is called bipartite if the vertex set $V(G)$ can be partitioned as $V(G)=V_1 \sqcup V_2$, and for every edge $\{u_1,u_2\} \in E(G)$, one has $u_1 \in V_1$ and $u_2 \in V_2$.  For $m,n >0$, $K_{m,n}$ denotes the \textit{complete bipartite} graph on $[m+n]$. The graph $K_{1,m}$ is called the star graph. We recall that the edge ideal of $G$ over a field $K$, denoted by $I(G)$, is defined by $$I(G)=(x_ix_j \mid \{i,j\}\in E(G)) \subset K[x_1,\ldots,x_n].$$

Next, we recall the definition of the binomial edge ideal of pair of graphs (cf. \cite{EHHQ2014}) and its connection with the binomial ideal of $\mcC$, where $\mcC$ is an arbitrary collection of corner-interval minors of $M$.

Let $G_1$ be a simple graph on $[m]$ and $G_2$ be a simple graph on $[n]$. Let $e=\{i,j\}$ for some $1\leq i < j \leq m$ and $f=\{k,l\}$ for some $1\leq k < l \leq n$. The ideal 
$$J_{G_1,G_2}=(p_{e,f}=x_{ik}x_{jl}-x_{il}x_{jk} \mid e\in E(G_1), f \in E(G_2)) \subset S$$
is called the binomial edge ideal of pair $(G_1,G_2)$. 

\begin{example}
\begin{enumerate}
    \item Let $\mcC$ be of the set of all corner minors of $M$. Then the ideal $I(\mcC)$ coincides with the ideal $J_{G_1,G_2}$, where $G_1$ and $G_2$ are star graphs. 
    \item Let $\mcC$ be of the set of all corner-interval minors of the matrix $M$. Then the binomial ideal $I(\mcC)$ coincides with the ideal $J_{G_1,G_2}$, where $G_1$ is the complete graph on $[m]$ and $G_2$ is the star graph on $[n]$. 
\end{enumerate}
\end{example}

\begin{remark}\cite[Proposition 5.4]{S1995} \label{rem.Mgb}
    If $G_1$ and $G_2$ are complete graphs, then $J_{G_1,G_2} = I_2(M)$ and the minimal generating set of $J_{G_1,G_2}$ forms a reduced Gr\"obner basis under a suitable term order. 
\end{remark}

\subsection{Commutative algebra}
Let $S$ be a standard graded polynomial ring over a field $K$. Let $M$ be a finitely generated graded $S$-module. Let $\mathbf{F}_{\bullet}$ be a minimal graded $S$-free resolution of $M$:
$$\mathbf{F}_{\bullet}: 0 \longrightarrow \bigoplus_{j}S(-j)^{\beta_{p,j}(M)}   \longrightarrow  \bigoplus_{j}S(-j)^{\beta_{p-1,j}(M)} \longrightarrow  \cdots \longrightarrow  \bigoplus_{j}S(-j)^{\beta_{0,j}(M)}   \longrightarrow M \longrightarrow  0,$$
where $S(-j)$ denotes the graded free module of rank $1$ obtained by shifting the degrees in $S$ by $j$, and $\beta_{i,j}(M)$ denotes the $(i,j)$-th graded Betti number of $M$ over $S$. 

The \emph{Castelnuovo-Mumford regularity} of $M$ over $S$, denoted by $\reg (M)$, is defined as $$\reg(M) \coloneqq \max \{j-i \mid \beta_{i,j}(M) \neq 0\}. $$

We recall the definition of Hilbert series as follows (cf. \cite{E1995}). Let $H(M,i)$ be the Hilbert function of $M$. The Hilbert-Poincare series of $S$-module $M$ is $$H_M(z)=\sum_{i\geq 0} H(M,i)z^i.$$
By the Hilbert–Serre theorem, the Hilbert-Poincare series of $M$ is a rational function, we have $$H_M(z)= \hp_M(z)/(1-z)^d,$$ where $d$ is the Krull dimension of $S$-module $M$. The numerator is the Hilbert-Poincar\'e polynomial of $M$ and has the form $\hp_M(z)=\sum_{i=0}^{p}\sum_{j=0}^{p+r}(-1)^i\beta_{i,j}(M)z^j$, where $p$ is the projective dimension of $M$.

\vspace{2mm}

In the following remark we recall a basic property of the Hilbert-poincar\'e polynomial and the regularity of an $S$-module.

\begin{remark}\label{rem.Reg} \cite[Corollary 20.19]{E1995}
Let $ 0 \rightarrow \mathcal{L} \rightarrow \mathcal{M} \rightarrow \mathcal{N}   \rightarrow  0$ be a short exact sequence of finitely generated graded $S$-modules. Then the following holds.
\begin{enumerate}[(a)]
    \item $\hp_{\mathcal{M}}(z)=\hp_{\mathcal{L}}(z)+\hp_{\mathcal{N}}(z)$.
   \item $\reg{\mathcal{L}} \leq \max \{\reg \mathcal{M}, \reg \mathcal{N} +1\}$. The equality holds if $\reg \mathcal{M} \neq \reg \mathcal{N}$.
   
   \item $\reg{\mathcal{M}} \leq \max \{\reg \mathcal{L}, \reg \mathcal{N}\}$. The equality holds if $\reg \mathcal{L} \neq \reg \mathcal{N} +1$.
     
\end{enumerate}
\end{remark}

\begin{remark} \cite[Exesise 15.4]{E1995} \label{rem.starhp}
    Let $K_{1,m}$ be a star graph. Then one has
    $$\hp_{I(K_{1,m})}(z)=z(1-z)^{m}+(1-z).$$
\end{remark}

The following result characterizes the edge ideals of finite simple graphs with linear resolution.

\begin{remark}\cite[Theorem 1]{F1990}\label{rem.froberg}
    Let $G$ be a finite simple graph. Then $I(G)$ has regularity $2$ if and only if the complement graph of $G$ is chordal.
\end{remark}

In the following, we recall a useful combinatorial criterion to address the membership problem, as given in \cite{DES1998}.

    Let $\mathcal{B}$ be a set of vectors in $\mathbb{Z}^n$. Define a graph $G_{\mathcal{B}}$ whose vertices are non negative $n$-tuples $\mathbb{N}^n$ and $\{u,v\} \in E(G_{\mathcal{B}})$ if and only if $u-v$ is in $\pm \mathcal{B}$. For every vector $u=(u_1,\ldots,u_n)$ of positive integers, we define a monomial $x^u=x_1^{u_1}x_2^{u_2}\cdots x_n^{u_n} \in K[x_1,\ldots,x_n]$. Every vector $u\in \mathbb{Z}^n$ can be uniquely written as $u=u_+-u_-$, where $u_+,u_-$ are non negative vectors with disjoint support. To the subset $\mathcal{B}$ we can associate an ideal as 
$$I_{\mathcal{B}}=(x^{u_+}-x^{u_-} \mid u \in \mathcal{B}) \subset K[x_1,\ldots,x_n].$$

\begin{remark}\cite[Theorem 1.1]{DES1998} \label{rem.bwalk}
    Two vectors $u,v \in \mathbb{N}^n$ are in the same component of $G_{\mathcal{B}}$ if and only if $x^u-x^v \in I_{\mathcal{B}}$.
\end{remark}

For convenience, we denote $\mathcal{B_C}$ for the basis vectors corresponding to the set $\mcC$.

\section{Minimal prime ideals} \label{sec.min}

In this section, we study binomial ideals generated by corner-interval minors and aim to explicitly describe their minimal prime ideals. We begin by introducing the necessary definitions.

\vspace{2mm}

A subset $U=\{x_{i_1j},\ldots,x_{i_k,
j}\} \subset V(\mathcal{C})$ is called \textit{vertical interval} if every pair of vertices in $U$ is connected by a sequence of vertical edges (path) of $\mathcal{C}$. We call a  vertical interval \textit{maximal} if it is not strictly contained in any other vertical interval of $\mathcal{C}$. Similarly, one can define horizontal interval and maximal horizontal interval.

\begin{example}
    Let $\mcC$ be the corner-interval minors as shown in Figure \ref{fig.maxC}. Then the ideal 
    $$I(\mcC) = (x_{12}x_{21}-x_{11}x_{22},x_{23}x_{31}-x_{21}x_{33},x_{14}x_{31}-x_{11}x_{34},x_{32}x_{41}-x_{31}x_{42}).$$ 
    The right side of Figure \ref{fig.maxC} shows the corner-interval $\mcC$ with the maximal vertical intervals $\{V_1,V_2,V_3,V_4,V_5\}$ and the maximal horizontal intervals $\{H_1,H_2,H_3,H_4\}$. The set $\{x_{12},x_{22},x_{32}\}$ does not form a vertical interval of $\mcC$, as the vertex $x_{32}$ is not connected with either $x_{12}$ or $x_{22}$ by any sequence of vertical edges of $\mcC$. 
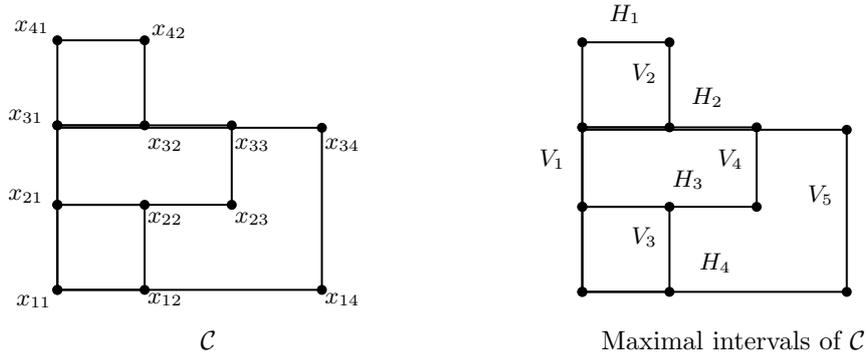
\begin{figure}[ht]
    \centering
    \tikzset{every picture/.style={line width=0.75pt}} 

\begin{tikzpicture}[x=0.75pt,y=0.75pt,yscale=-1,xscale=1]

\draw   (210,397.9) -- (253.54,397.9) -- (253.54,440.68) -- (210,440.68) -- cycle ;
\draw   (210,315.15) -- (253.54,315.15) -- (253.54,357.93) -- (210,357.93) -- cycle ;
\draw   (210,357.93) -- (297.03,357.93) -- (297.03,397.9) -- (210,397.9) -- cycle ;
\draw   (210,359.15) -- (342.03,359.15) -- (342.03,440.68) -- (210,440.68) -- cycle ;
\draw   (472,398.9) -- (515.54,398.9) -- (515.54,441.68) -- (472,441.68) -- cycle ;
\draw   (472,316.15) -- (515.54,316.15) -- (515.54,358.93) -- (472,358.93) -- cycle ;
\draw   (472,358.93) -- (559.03,358.93) -- (559.03,398.9) -- (472,398.9) -- cycle ;
\draw   (472,360.15) -- (604.03,360.15) -- (604.03,441.68) -- (472,441.68) -- cycle ;

\draw (188,441.68) node [anchor=north west][inner sep=0.75pt]  [font=\footnotesize]  {$x_{11}$};
\draw (253.54,440.68) node [anchor=north west][inner sep=0.75pt]  [font=\footnotesize]  {$x_{12}$};
\draw (342.03,440.68) node [anchor=north west][inner sep=0.75pt]  [font=\footnotesize]  {$x_{14}$};
\draw (184,388.68) node [anchor=north west][inner sep=0.75pt]  [font=\footnotesize]  {$x_{21}$};
\draw (184,348.68) node [anchor=north west][inner sep=0.75pt]  [font=\footnotesize]  {$x_{31}$};
\draw (187,303.68) node [anchor=north west][inner sep=0.75pt]  [font=\footnotesize]  {$x_{41}$};
\draw (255.54,306.15) node [anchor=north west][inner sep=0.75pt]  [font=\footnotesize]  {$x_{42}$};
\draw (253.54,363) node [anchor=north west][inner sep=0.75pt]  [font=\footnotesize]  {$x_{32}$};
\draw (297.03,400) node [anchor=north west][inner sep=0.75pt]  [font=\footnotesize]  {$x_{23}$};
\draw (297.03,363) node [anchor=north west][inner sep=0.75pt]  [font=\footnotesize]  {$x_{33}$};
\draw (342.03,363) node [anchor=north west][inner sep=0.75pt]  [font=\footnotesize]  {$x_{34}$};
\draw (253.54,400) node [anchor=north west][inner sep=0.75pt]  [font=\footnotesize]  {$x_{22}$};
\draw (484,295.68) node [anchor=north west][inner sep=0.75pt]  [font=\footnotesize]  {$H_{1}$};
\draw (529,420.68) node [anchor=north west][inner sep=0.75pt]  [font=\footnotesize]  {$H_{4}$};
\draw (525,337.68) node [anchor=north west][inner sep=0.75pt]  [font=\footnotesize]  {$H_{2}$};
\draw (515.52,378.92) node [anchor=north west][inner sep=0.75pt]  [font=\footnotesize]  {$H_{3}$};
\draw (583,386.68) node [anchor=north west][inner sep=0.75pt]  [font=\footnotesize]  {$V_{5}$};
\draw (449,368.68) node [anchor=north west][inner sep=0.75pt]  [font=\footnotesize]  {$V_{1}$};
\draw (495,325.68) node [anchor=north west][inner sep=0.75pt]  [font=\footnotesize]  {$V_{2}$};
\draw (495,407.68) node [anchor=north west][inner sep=0.75pt]  [font=\footnotesize]  {$V_{3}$};
\draw (538,368.68) node [anchor=north west][inner sep=0.75pt]  [font=\footnotesize]  {$V_{4}$};

\draw (480,460) node [anchor=north west][inner sep=0.75pt]  [font=\small]  {Maximal intervals of $\mcC$};
\draw (280,460) node [anchor=north west][inner sep=0.75pt]  [font=\small]  {$\mcC$};

\filldraw[black] (604.03,360.15) circle (1.5pt) ;
\filldraw[black] (604.03,441.68) circle (1.5pt) ;
\filldraw[black] (472,441.68) circle (1.5pt) ;

\filldraw[black] (472,358.93)  circle (1.5pt) ;
\filldraw[black] (559.03,358.93) circle (1.5pt) ;
\filldraw[black] (559.03,398.9) circle (1.5pt) ;
\filldraw[black] (472,398.9) circle (1.5pt) ;

\filldraw[black]  (472,398.9) circle (1.5pt) ;
\filldraw[black] (515.54,398.9) circle (1.5pt) ;
\filldraw[black] (515.54,441.68) circle (1.5pt) ;
\filldraw[black] (472,441.68) circle (1.5pt) ;

\filldraw[black] (472,316.15)  circle (1.5pt) ;
\filldraw[black] (515.54,316.15) circle (1.5pt) ;
\filldraw[black] (515.54,358.93) circle (1.5pt) ;
\filldraw[black] (472,358.93) circle (1.5pt) ;

\filldraw[black] (210,397.9)  circle (1.5pt) ;
\filldraw[black] (253.54,397.9)  circle (1.5pt) ;
\filldraw[black] (253.54,440.68)  circle (1.5pt) ;
\filldraw[black] (210,440.68)  circle (1.5pt) ;

\filldraw[black] (210,315.15)  circle (1.5pt) ;
\filldraw[black] (253.54,315.15)  circle (1.5pt) ;
\filldraw[black]  (253.54,357.93) circle (1.5pt) ;
\filldraw[black] (210,357.93)  circle (1.5pt) ;

\filldraw[black] (210,357.93)  circle (1.5pt) ;
\filldraw[black] (297.03,357.93)  circle (1.5pt) ;
\filldraw[black] (297.03,397.9) circle (1.5pt) ;
\filldraw[black] (210,397.9) circle (1.5pt) ;

\filldraw[black] (342.03,359.15)  circle (1.5pt) ;
\filldraw[black] (342.03,440.68) circle (1.5pt) ;
\filldraw[black] (210,440.68) circle (1.5pt) ;
\end{tikzpicture}

    \caption{The corner-interval $\mcC$ and the maximal intervals of $\mcC$.}
    \label{fig.maxC}
\end{figure}
\end{example}

Let $\{V_1,\ldots,V_p\}$ be the set of  maximal vertical intervals and $\{H_1,\ldots,H_q\}$ be the set of maximal horizontal intervals of $\mathcal{C}$. The associated bipartite graph of $\mathcal{C}$, denoted by $G(\mathcal{C})$, is defined as with vertex set $V(G(\mathcal{C}))=\{v_1,\ldots,v_p\}\sqcup \{h_1,\ldots,h_q\}$, and the edge set $$E(G(\mathcal{C}))= \{\{h_i,v_j\} \mid H_i \cap V_j \in  V(\mathcal{C})\}.$$

\begin{example}
     Let $\mcC$ be the corner-interval minors as shown in Figure \ref{fig.maxC}. The associated bipartite graph $G(\mcC)$ of $\mcC$ is depicted in  Figure \ref{fig:g(c)}.

\begin{figure}[ht]
    \centering
    \tikzset{every picture/.style={line width=0.75pt}} 

\begin{tikzpicture}[x=0.75pt,y=0.75pt,yscale=-1,xscale=1]

\draw    (297,577.38) -- (257.03,667.48) ;
\draw    (338.03,576.48) -- (257.03,667.48) ;
\draw    (338.03,576.48) -- (298.07,666.58) ;
\draw    (377,578.38) -- (337.03,668.48) ;
\draw    (377,578.38) -- (378.03,667.48) ;
\draw    (377,578.38) -- (257.03,667.48) ;
\draw    (418,577.38) -- (257.03,667.48) ;
\draw    (298.07,666.58) -- (297,577.38) ;
\draw    (418,577.38) -- (337.03,668.48) ;
\draw    (338.03,576.48) -- (378.03,667.48) ;
\draw    (418.03,664.57) -- (338.03,576.48) ;
\draw    (418,577.38) -- (418.03,664.57) ;

\draw (242,672) node [anchor=north west][inner sep=0.75pt]  [font=\footnotesize]  {$v_{1}$};
\draw (286,672) node [anchor=north west][inner sep=0.75pt]  [font=\footnotesize]  {$v_{2}$};
\draw (327,672) node [anchor=north west][inner sep=0.75pt]  [font=\footnotesize]  {$v_{3}$};
\draw (370,672) node [anchor=north west][inner sep=0.75pt]  [font=\footnotesize]  {$v_{4}$};
\draw (409,672) node [anchor=north west][inner sep=0.75pt]  [font=\footnotesize]  {$v_{5}$};
\draw (287,556.47) node [anchor=north west][inner sep=0.75pt]  [font=\footnotesize]  {$h_{1}$};
\draw (329,555.47) node [anchor=north west][inner sep=0.75pt]  [font=\footnotesize]  {$h_{2}$};
\draw (367,557.47) node [anchor=north west][inner sep=0.75pt]  [font=\footnotesize]  {$h_{3}$};
\draw (409,556.47) node [anchor=north west][inner sep=0.75pt]  [font=\footnotesize]  {$h_{4}$};

\draw (327,690) node [anchor=north west][inner sep=0.75pt]  [font=\small]  {$G(\mcC)$};

\filldraw[black] (297,577.38)  circle (1.5pt) ;
\filldraw[black] (257.03,667.48) circle (1.5pt) ;
\filldraw[black] (338.03,576.48) circle (1.5pt) ;
\filldraw[black] (377,578.38)  circle (1.5pt) ;
\filldraw[black] (298.07,666.58) circle (1.5pt) ;
\filldraw[black] (378.03,667.48) circle (1.5pt) ;\filldraw[black] (337.03,668.48)  circle (1.5pt) ;
\filldraw[black] (418,577.38) circle (1.5pt) ;
\filldraw[black] (418.03,664.57) circle (1.5pt) ;

\end{tikzpicture}
    \caption{Associated bipartite graph of $\mcC$.}
    \label{fig:g(c)}
\end{figure}
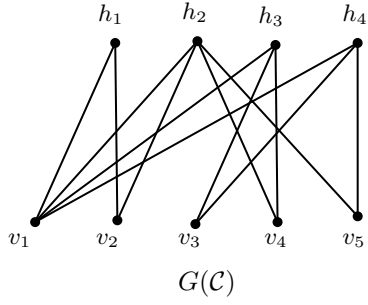
\end{example}

\vspace{2mm}

Note that $|H_i \cap V_j| \leq 1$. For given pair $H_i$ and $V_j$, we denote their intersecting vertex by $x_{a_ib_j}$. To each cycle $\sigma: h_{i_1},v_{j_1},h_{i_2},v_{j_2},\ldots,h_{i_r},v_{i_r}$ in $G(\mathcal{C})$, where $r\geq 2$, we associate a binomial defined by: 
$$f_{\sigma}=x_{a_{i_1},b_{j_1}}\cdots x_{a_{i_r},b_{j_r}} - x_{a_{i_2},b_{j_1}}\cdots x_{a_{i_1},b_{j_r}}.$$ 

Let $J_{\mcC}$ be an ideal generated by all the binomials $f_{\sigma}$, where $\sigma$ is a cycle in $G(\mathcal{C})$.

\begin{example}
    Let $\mcC$ be the corner-interval minors as shown in Figure \ref{fig.maxC}. Then the ideal 
    $$J_\mcC= I(\mcC)+(x_{12}x_{23}x_{34}-x_{14}x_{22}x_{33}).$$
\end{example}

In \cite{Q2012}, it is shown that $J_{\mcC}$ is the toric ideal of the edge ring of $G(\mathcal{C})$; we recall it below. 

\vspace{2mm}

Let  $K[G(\mathcal{C})]=K[h_iv_j \mid \{i,j\}\in E(G(\mcC))]$ be the subalgebra of polynomial algbera $T=K[h_1,\ldots,h_q,v_1,\ldots,v_p]$. Let $\phi:S \rightarrow T$ be the surjective $K$-algebra homomorphism defined by $\phi(x_{ij})=h_iv_j$, where $x_{ij}=h_i\cap v_j$. We denote by $J_\mcC$ the toric ideal of $K[G(\mathcal{C})]$. Thus, $J_\mcC$ is a prime ideal. From \cite{Q2012} it is known that the kernel $J_\mcC$ of $\phi$ is generated by the binomials $f_\sigma$, where $\sigma$ is a cycle of $G(\mcC)$. 

\vspace{2mm}

The following remarks and Proposition \ref{prop.1} describe some key properties of $J_\mcC$ and $G(\mcC)$.

\begin{remark}
    Let $\mathcal{C}$ be an arbitrary set of corner-interval minors. Then one has $I(\mcC) \subset J_{\mcC}$.
\end{remark}


\begin{remark} \label{remark.1}
    Let $\mathcal{C}$ be an arbitrary set of corner-interval minors. Suppose $V_1=\{x_{11},x_{21},\ldots,x_{m1}\}$ be a maximal vertical interval of $\mcC$. Let $\sigma: h_{i_1},v_{j_1},h_{i_2},v_{j_2},\ldots,h_{i_r},v_{i_r}$ be a cycle in $G(\mathcal{C})$. If $r>2$ and $v_{j_k}=v_1$ for some $k$, then $\sigma$ has a chord. The assertion follows from the fact that every maximal horizontal interval intersects with the vertical interval $V_1$. In particular, the edge $\{h_{i_{k-1}},v_{j_k}\}$ or $\{h_{i_{k+2}},v_{j_k}\}$ forms a chord in the cycle $\sigma$. 
\end{remark}

\begin{remark} \label{remark.corner}
    Let $\mathcal{C}$ be an arbitrary collection of corner minors. Denote by $H_1$ and $V_1$ the maximal horizontal and vertical intervals that contain the vertex $x_{11}$, respectively. Let $\sigma: h_{i_1},v_{j_1},h_{i_2},v_{j_2},\ldots,h_{i_r},v_{i_r}$ be a cycle in $G(\mathcal{C})$. If $r>2$ and $v_{j_k}=v_1$ or $h_{j_k}=h_1$ for some $k$, then $\sigma$ has a chord. Since each maximal horizontal interval intersects with the vertical interval $V_1$ and each maximal vertical interval intersects with the horizontal interval $H_1$.
\end{remark}

\begin{proposition} \label{prop.1}
    Let $\mathcal{C}$ be an arbitrary set of corner-interval minors. Then, for every vertical interval $\{x_{ij},x_{lj}\}$ in $\mcC$, with $j\neq 1$, one has $x_{i1}x_{lj}-x_{l1}x_{ij} \in J_{\mcC}$. 
\end{proposition}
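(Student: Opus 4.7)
The plan is to show that $x_{i1}x_{lj} - x_{l1}x_{ij}$ lies in the kernel of the toric map $\phi\colon S \to T$ defined by $\phi(x_{ab}) = h_{\alpha}v_{\beta}$, where $H_{\alpha}$ and $V_{\beta}$ are the maximal horizontal and vertical intervals of $\mcC$ containing $x_{ab}$. Since $\ker \phi = J_{\mcC}$, it suffices to verify the identity $\phi(x_{i1}x_{lj}) = \phi(x_{l1}x_{ij})$ in $T$, and I will do this by tracking how the corner-interval structure forces certain pairs of vertices into common maximal intervals.

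First I would fix a sequence of vertical edges of $\mcC$ witnessing the vertical interval $\{x_{ij},x_{lj}\}$, say $x_{i_0 j}, x_{i_1 j}, \ldots, x_{i_k j}$ with $i_0 = i$ and $i_k = l$. Since $j \neq 1$, each edge $\{x_{i_{m-1} j}, x_{i_m j}\}$ must be the right vertical edge of a corner-interval minor $\delta_m \in \mcC$ of the form $[a,b \mid 1,j]$ with $\{a,b\} = \{i_{m-1}, i_m\}$; indeed, $b_1 = 1$ for a corner-interval minor, so the only way to produce a vertical edge in column $j \neq 1$ is for the minor to span columns $1$ and $j$. Such a $\delta_m$ also contributes the left vertical edge $\{x_{i_{m-1} 1}, x_{i_m 1}\}$ in column $1$, together with the horizontal edges $\{x_{i_{m-1} 1}, x_{i_{m-1} j}\}$ and $\{x_{i_m 1}, x_{i_m j}\}$.

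Chaining the left vertical edges over $m = 1, \ldots, k$ shows that $x_{i_0 1}, \ldots, x_{i_k 1}$ all lie in a common maximal vertical interval $V_{\ast}$ in column $1$, while the original path puts $x_{i_0 j}, \ldots, x_{i_k j}$ in a common maximal vertical interval $V_{\dagger}$ in column $j$. The horizontal edges identified above place $x_{i_m 1}$ and $x_{i_m j}$ in one and the same maximal horizontal interval $H_{\beta_m}$ for every $m$. Hence $\phi(x_{i_m 1}) = h_{\beta_m} v_{\ast}$ and $\phi(x_{i_m j}) = h_{\beta_m} v_{\dagger}$, and specialising to $m = 0$ and $m = k$ yields
\[
\phi(x_{i1}x_{lj}) \;=\; h_{\beta_0} h_{\beta_k} v_{\ast} v_{\dagger} \;=\; \phi(x_{l1}x_{ij}),
\]
which is the required identity. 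The main subtlety is the structural observation in the second paragraph that a corner-interval minor automatically pairs a column-$j$ vertical edge with a matching column-$1$ vertical edge; once this is in place, the computation with $\phi$ is immediate.
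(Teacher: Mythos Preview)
Your proof is correct and follows essentially the same route as the paper: both arguments use the structural observation that a corner-interval minor producing a vertical edge in column~$j$ also produces a matching vertical edge in column~$1$ and horizontal edges linking the two columns, and then conclude that the four vertices $x_{i1},x_{ij},x_{l1},x_{lj}$ sit at the intersections of two maximal horizontal intervals with two maximal vertical intervals. The only difference is cosmetic: the paper phrases the conclusion by exhibiting the $4$-cycle $h_{\beta_0},v_\ast,h_{\beta_k},v_\dagger$ in $G(\mcC)$ and invoking $f_\sigma\in J_{\mcC}$, whereas you compute $\phi$ on the four vertices and appeal to $J_{\mcC}=\ker\phi$ directly.
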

\begin{proof}
    Let $\{x_{ij},x_{lj}\}$ be a vertical interval in $\mcC$. If $\{x_{ij},x_{lj}\} \in E(\mcC)$, then it follows that $x_{i1}x_{lj}-x_{l1}x_{ij} \in I(\mcC) \subset J_{\mcC}$. Suppose $\{x_{ij},x_{lj}\} \notin E(\mcC)$, then by the definition of vertical interval there exists a sequence of vertical edges $x_{ij}=x_{i_1j},x_{i_2j},\ldots,x_{i_kj}=x_{lj}$ with $\{x_{i_rj},x_{i_{r+1}j}\} \in E(\mcC)$ for $r=1,\ldots,k-1$. This implies that the minors $x_{i_r1}x_{i_{r+1}j}-x_{i_{r+1}1}x_{i_rj} \in I(\mcC)$, for all $r$. Then it follows that the sequence of vertices $h_{i_r},v_1,h_{i_{r+1}},v_j$ of $G(\mcC)$ forms a cycle, where $H_{i_{r}}\cap V_{1} =x_{i_r1}, H_{i_{r+1}}\cap V_{1} =x_{i_{r+1}1}, H_{i_{r}}\cap V_{j}=x_{i_rj}, H_{i_{r+1}}\cap V_{j} =x_{i_{r+1}j}$ are the associated vertical and horizontal intervals (misuse of notation) of the vertices of $\mcC$, for all $r$. Thus, one has the edges $\{h_{i_{1}},v_1\}$ and $\{h_{i_{k}},v_1\}$ belongs to $G(\mcC)$. Therefore, we have that the sequence of vertices $h_{i_1},v_1,h_{i_{k}},v_j$ forms a cycle in $G(\mcC)$, as desired.
\end{proof}

In the following lemma, we study the minimal prime ideals of $I(\mcC)$ that do not contain any variables.

\begin{lemma} \label{lemma.primenovar}
    Let $\mathcal{C}$ be an arbitrary collection of corner-interval minors. Then the ideal $J_{\mcC}$ is a minimal prime of $I(\mathcal{C})$.
\end{lemma}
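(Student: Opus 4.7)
The plan is to show that $J_\mcC\subseteq I(\mcC):W^\infty$, where $W$ denotes the multiplicative set generated by the column-$1$ variables $\{x_{i,1}:x_{i,1}\in V(\mcC)\}$. Since $J_\mcC$ is the toric ideal of the edge ring $K[G(\mcC)]$, it is prime and contains no monomial, hence is disjoint from $W$. Once the inclusion $J_\mcC\subseteq I(\mcC):W^\infty$ is established, any prime $P$ with $I(\mcC)\subseteq P\subseteq J_\mcC$ automatically avoids $W$, and for every cycle binomial $f_\sigma$ generating $J_\mcC$ a relation $m\cdot f_\sigma\in I(\mcC)\subseteq P$ with $m\in W$ forces $f_\sigma\in P$ by primality. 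Thus $J_\mcC\subseteq P$, giving $P=J_\mcC$ and showing that $J_\mcC$ is a minimal prime of $I(\mcC)$.

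The first technical step is a strengthening of Proposition \ref{prop.1}: for every vertical interval $\{x_{i,j},x_{l,j}\}$ of $\mcC$ with $j\neq 1$, I would show that $x_{i,1}x_{l,j}-x_{l,1}x_{i,j}\in I(\mcC):W^\infty$. The proof is by induction on the length of a vertical path in $\mcC$ connecting $x_{i,j}$ and $x_{l,j}$. For a single-edge path the binomial is literally a generator of $I(\mcC)$ arising from the minor $[i,l\mid 1,j]\in\mcC$. For longer paths, pick an intermediate vertex $x_{t,j}$ on the path; combining the inductive hypothesis $x_{i,1}x_{t,j}-x_{t,1}x_{i,j}\in I(\mcC):W^\infty$ with the one-edge relation $x_{t,1}x_{l,j}-x_{l,1}x_{t,j}\in I(\mcC)$ yields $x_{t,1}(x_{i,1}x_{l,j}-x_{l,1}x_{i,j})\in I(\mcC):W^\infty$, and $x_{t,1}\in W$ removes the extra factor.

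The second step handles an arbitrary cycle $\sigma:h_{i_1},v_{j_1},\ldots,h_{i_r},v_{j_r}$ in $G(\mcC)$. Writing $a_k=a_{i_k}$, $b_k=b_{j_k}$, and reading indices modulo $r$, both $x_{a_k,b_k}$ and $x_{a_{k+1},b_k}$ lie in the vertical interval $V_{j_k}$, so the first step supplies
\[
x_{a_k,1}\,x_{a_{k+1},b_k}-x_{a_{k+1},1}\,x_{a_k,b_k}\in I(\mcC):W^\infty,\qquad k=1,\ldots,r.
\]
Multiplying these $r$ congruences together and observing that $\prod_k x_{a_k,1}=\prod_k x_{a_{k+1},1}$ (the indices coincide after a cyclic shift) gives $(\prod_k x_{a_k,1})\cdot f_\sigma\in I(\mcC):W^\infty$, and therefore $f_\sigma\in I(\mcC):W^\infty$. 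The case $b_k=1$ contributes the trivial relation $0$, which poses no obstacle, and the variables $x_{a_k,1}$ appearing in the telescoping all belong to $V(\mcC)$ because every maximal horizontal interval of $\mcC$ meets $V_1$ (every corner-interval minor uses column $1$).

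The main obstacle is the first step: Proposition \ref{prop.1} only certifies the vertical-interval binomial inside $J_\mcC$, and strengthening the containment to $I(\mcC):W^\infty$ requires the inductive telescoping above together with careful tracking of which column-$1$ variable absorbs the extra factor. Once this is in place, the cycle argument is a clean cyclic-product computation in which the column-$1$ denominators collapse, and the primality of $P$ finishes the proof.
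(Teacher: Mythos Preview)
Your argument is correct and follows the same overall strategy as the paper: show that every cycle binomial $f_\sigma$ lies in a monomial saturation of $I(\mcC)$, then conclude by primality.  The paper saturates at the product of \emph{all} variables and proves $I(\mcC):x^\infty=J_\mcC$, splitting into the case of $4$-cycles (handled via Proposition~\ref{prop.1} and an explicit identity $x_{l1}f_\sigma=x_{lk}f-x_{lj}g$) and the case of longer cycles (handled by an inductive construction of auxiliary elements $\lambda_\ell$).  Your version is more economical on two counts: you saturate only at the column-$1$ variables, which is all that is really needed since every corner-interval minor touches column $1$; and your cyclic-product identity $\prod_k x_{a_k,1}\cdot f_\sigma\in I(\mcC):W^\infty$ treats all cycle lengths uniformly, replacing the paper's two-case analysis and $\lambda_\ell$ induction.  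Your inductive strengthening of Proposition~\ref{prop.1} to land in $I(\mcC):W^\infty$ rather than $J_\mcC$ is exactly the right fix, and is implicit in the paper's proof as well (the paper's appeal to Proposition~\ref{prop.1} in the $4$-cycle case really uses the underlying telescoping in $I(\mcC)$, not merely membership in $J_\mcC$).
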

\begin{proof}
   Let $x=\prod x_{ij}$. We claim that $I(\mathcal{C}):x^{\infty}=J_{\mcC}$. Since $I(\mathcal{C}) \subset J_{\mcC}$ and $J_{\mcC}$ is a prime ideal, it follows that $I(\mathcal{C}):x^{\infty} \subseteq J_{\mcC}$. To show the other side containment, let $f_\sigma \in J_{\mcC}$, where $\sigma$ an arbitrary cycle in $G(\mcC)$. We will claim that $f_\sigma \in I(\mathcal{C}):x^{\infty}$. 
   
    Suppose $\sigma$ is a cycle of length $4$ in $G(\mcC)$, and let $f_\sigma=x_{ij}x_{lk}-x_{ik}x_{lj}$ be the corresponding binomial. Then $\{x_{ij},x_{lj}\}$ and $\{x_{ik},x_{lk}\}$ are the two vertical intervals in $\mcC$ corresponding to the cycle $\sigma$. Suppose either $j=1$ or $k=1$. Assume $j=1$. Then, since $\{x_{ik},x_{lk}\}$ is a vertical interval of $\mcC$ with $k\neq 1$, from Proposition \ref{prop.1} it follows that $f_{\sigma} \in J_{\mcC}$, which implies the assertion. If $j\neq 1$ and $k\neq 1$, then from Proposition \ref{prop.1} one has the elements $f=x_{i1}x_{lj}-x_{l1}x_{ij}$ and $g=x_{i1}x_{lk}-x_{l1}x_{ik}$ belongs to $J_\mcC$. Since $x_{l1}f_{\sigma}=x_{lk}f-x_{lj}g$, it follows that $f_{\sigma} \in I({\mcC}):x^{\infty}$. 
   
    Suppose $\sigma: h_{i_1},v_{j_1},h_{i_2},v_{j_2},\ldots,h_{i_r},v_{i_r}$ is a cycle of length $2r$ in $G(\mcC)$ with $r\geq 3$, and let $f_{\sigma}=x_{a_{i_1},b_{j_1}}\cdots x_{a_{i_r},b_{j_r}} - x_{a_{i_2},b_{j_1}}\cdots x_{a_{i_1},b_{j_r}}$ be the corresponding binomial. Assume that $x_{a_{i_k}b_{j_k}} \neq x_{a_{i_k}1}$ for all $k$; otherwise, the cycle $\sigma$ contains a chord by Remark \ref{remark.1}. Since $\{x_{a_{i_k}b_{j_k}},x_{a_{i_{k+1}}b_{j_k}}\}$ is a vertical interval in $\mcC$ for each $k$ (this vertical interval associated with the edges $\{v_{i_k},h_{i_k}\}$ and $\{v_{i_{k+1}},h_{i_k}\}$ of $\sigma$), it follows from Proposition \ref{prop.1} that $\mu_k=x_{a_{i_k}1}x_{a_{i_{k+1}}b_{j_k}}-x_{a_{i_{k+1}}1}x_{a_{i_k}b_{j_k}} \in J_{\mcC}$, where $k=1,\ldots,r$ and $i_{r+1}=i_1$. 
    
    First, we claim that the element $\lambda_{\ell}= x_{a_{i_{1}}1}x_{a_{i_2}b_{j_1}}\cdots x_{a_{i_\ell}b_{j_{\ell-1}}} - x_{a_{i_{\ell}}1}x_{a_{i_1}b_{j_1}}\cdots x_{a_{i_{\ell-1}}b_{j_{\ell-1}}} \in J_{\mcC}$, for $\ell = 2,\ldots,r$. The claim is proved by induction on $\ell$. For $\ell =2$, this is the case by $\mu_1$. Suppose now that $\ell >2$. By applying induction hypothesis, we have that $\lambda_{r-1} =x_{a_{i_{1}}1}x_{a_{i_2}b_{j_1}}\cdots x_{a_{i_{r-1}}b_{j_{r-2}}} - x_{a_{i_{r-1}}1}x_{a_{i_1}b_{j_1}}\cdots x_{a_{i_{r-2}}b_{j_{r-2}}}$ belongs to $J_{\mcC}$. Since $\lambda_r= x_{a_{i_{r}}b_{j_{r-1}}}\lambda_{r-1}-x_{a_{i_1}b_{j_1}}\cdots x_{a_{i_{r-2}}b_{j_{r-2}}}\mu_{r-1}$, it follows that $\lambda_r \in J_{\mcC}$. Then, one may write $x_{a_{i_{1}}}f_\sigma=x_{a_{i_{1}}b_{j_{r}}}\lambda_r - x_{a_{i_1}b_{j_1}}\cdots x_{a_{i_{r-1}}b_{j_{r-1}}}\mu_r$, thus it follows that $f_\sigma \in I(\mathcal{C}):x^{\infty}$. Hence, we prove the claim.

   Now, if $P$ is a minimal prime ideal of $I(\mcC)$ containing no variable, such a prime exists since $J_\mcC$ is a prime ideal, then $ J_{\mcC}=I(\mathcal{C}):x^{\infty} \subset P:x^{\infty}=P$, hence $J_{\mcC}$ is a minimal prime of $I(\mathcal{C})$.
\end{proof}

\begin{corollary}
    Let $\mathcal{C}$ be an arbitrary corner-interval minors. Then $I(\mcC)$ is prime if and only if $I(\mcC)=J_\mcC$.
\end{corollary}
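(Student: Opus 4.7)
The plan is to reduce both directions to Lemma \ref{lemma.primenovar} together with the already-established fact that $J_{\mcC}$ is a (toric) prime ideal.

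For the backward direction, assume $I(\mcC) = J_{\mcC}$. Since $J_{\mcC}$ is the kernel of the $K$-algebra homomorphism $\phi: S \to T$ defined by $\phi(x_{ij}) = h_iv_j$ (so that $S/J_{\mcC} \cong K[G(\mcC)] \subset T$, a domain), $J_{\mcC}$ is prime, and hence $I(\mcC)$ is prime.

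For the forward direction, suppose $I(\mcC)$ is prime. Then $I(\mcC)$ is its own unique minimal prime ideal. By Lemma \ref{lemma.primenovar}, $J_{\mcC}$ is a minimal prime of $I(\mcC)$, and since $I(\mcC) \subseteq J_{\mcC}$ is already noted in the preceding remark, the uniqueness of the minimal prime forces $I(\mcC) = J_{\mcC}$.

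There is no real obstacle here: the corollary is an immediate packaging of Lemma \ref{lemma.primenovar} (which supplies the non-trivial containment $J_{\mcC} \subseteq P$ for every minimal prime $P$ of $I(\mcC)$ containing no variable) together with the toric description of $J_{\mcC}$. The main care point is simply to invoke the correct earlier facts in the right order and note that a prime ideal has itself as its only minimal prime.
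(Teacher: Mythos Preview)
Your proof is correct and matches the paper's intended approach: the corollary is stated in the paper without proof, immediately after Lemma~\ref{lemma.primenovar}, precisely because both directions follow at once from that lemma together with the primality of $J_{\mcC}$, exactly as you have written.
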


\vspace{2mm}

Next, to study the minimal prime ideals of $I(\mcC)$ containing variables, we first present the following definitions.

\vspace{2mm}

\begin{definition}
    Let $\mcC=\delta_1,\delta_2,\ldots,\delta_r$ be a set of corner minors. A subset $W\subset V(\mcC)$ is called admissible if for each index $i$ either $W\cap V(\delta_i)=\emptyset$ or $W\cap V(\delta_i)\neq \emptyset$ contains an edge of $\delta_i$. 
\end{definition}

\begin{definition} \label{def.pw}
    Let  $W\subset V(\mcC)$ be an admissible set. We define an ideal $P_W(\mcC)$ as follows:
    $$P_W(\mcC)=(x_{ij}\mid x_{ij}\in W) + J_{\mcC'} \subset S,$$
    where $\mcC'=\{\delta \in \mcC \mid V(\delta)\cap W= \emptyset\}$.
\end{definition}

Notice that for an admissible set $W$ of $\mcC$, the associated ideal satisfies $P_W(\mcC)=W+P_\emptyset(\mcC')$, where $\mcC'=\{\delta \in \mcC \mid V(\delta)\cap W= \emptyset\}$.

\begin{example}
    Let $\mcC$ be the corner-interval minors as shown in Figure \ref{fig.maxC}. Then the admissible sets of $\mcC$ are the following:
    $$\emptyset,\{x_{12},x_{22}\},\{x_{32},x_{42}\},\{x_{23},x_{33}\},\{x_{14},x_{34}\},\{x_{41},x_{42}\},\{x_{11},x_{12},x_{14}\},\ldots,V(\mcC).$$ 
    For instance, if $W=\{x_{12},x_{22}\}$ then 
    \begin{equation*}
        \begin{split}
            P_{W}(\mcC)&=(x_{12},x_{22})+J_{\mcC'}, \text{ where } \mcC'=\{\delta \in \mcC \mid V(\delta)\cap W= \emptyset\},\\
            &=(x_{12},x_{22},x_{23}x_{31}-x_{21}x_{33},x_{14}x_{31}-x_{11}x_{34},x_{32}x_{41}-x_{31}x_{42}).
        \end{split}
    \end{equation*}
\end{example}

\begin{remark} \label{remak.prime}
    Let $\mathcal{C}$ be an arbitrary corner-interval minors. Then the ideal $P_W(\mcC)$ is a prime ideal, for every admissible set $W$ of $\mcC$. Since the ideals $J_{\mcC'}$ and $W$ are prime and generated in different set of variables. 
\end{remark}

The subsequent results highlights the significance of admissible sets in the study of the minimal prime ideals of $I(\mcC)$.

\begin{lemma} \label{lemma.admissible}
    Let $\mathcal{C}$ be an arbitrary set of corner-interval minors. Let $P$ be a prime ideal containing $I(\mcC)$, and let $W=\{x_{ij} \mid x_{ij}\in P\}$. Then $W$ is an admissible set.
\end{lemma}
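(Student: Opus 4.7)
The plan is to check the admissibility condition one minor at a time. Fix an arbitrary $\delta = [a_1,a_2\mid b_1,b_2] \in \mcC$ and suppose $W \cap V(\delta) \neq \emptyset$; the task reduces to producing two vertices of $\delta$ in $W$ that form one of the four edges of the rectangle $\delta$. The key leverage is the single binomial $f_\delta = x_{a_1b_1}x_{a_2b_2} - x_{a_1b_2}x_{a_2b_1} \in I(\mcC) \subset P$: whenever one factor of one of its two monomials lies in $P$, the whole monomial lies in $P$, and consequently so does the other monomial.

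First I would pick $x \in W \cap V(\delta)$ and split into four symmetric cases according to which corner of $\delta$ equals $x$. Take for instance $x = x_{a_1b_1}$: then $x_{a_1b_1}x_{a_2b_2} \in P$, so $f_\delta \in P$ forces $x_{a_1b_2}x_{a_2b_1} \in P$, and primality of $P$ then yields either $x_{a_1b_2} \in P$ (giving the top edge $\{x_{a_1b_1},x_{a_1b_2}\} \subset W$) or $x_{a_2b_1} \in P$ (giving the left edge $\{x_{a_1b_1},x_{a_2b_1}\} \subset W$). The three remaining cases $x \in \{x_{a_1b_2}, x_{a_2b_1}, x_{a_2b_2}\}$ follow by the identical recipe: the chosen vertex is a factor of exactly one monomial of $f_\delta$, so that monomial and hence the other lies in $P$, and primality extracts one of its two factors into $W$.

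I do not anticipate any substantive obstacle; this is essentially just the observation that a prime ideal containing a binomial and one factor of one of its monomials must contain at least one factor of the opposite monomial. The only point worth a moment of care is to verify that the variable produced by primality, together with the original $x$, really forms a side of $\delta$ rather than its diagonal. This is automatic from the shape of $f_\delta$: the two factors inside a single monomial are always opposite corners of $\delta$, so if $x$ is a factor of one monomial then the factors of the other monomial are precisely the two corners of $\delta$ adjacent to $x$, and either of them pairs with $x$ to form one of the four edges of $\delta$. Running this argument for every $\delta \in \mcC$ simultaneously gives the admissibility of $W$.
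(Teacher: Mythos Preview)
Your proposal is correct and follows essentially the same approach as the paper's proof: pick a vertex of $\delta$ lying in $W$, observe that the corresponding monomial of $f_\delta$ lies in $P$, deduce the other monomial lies in $P$, and use primality to extract an adjacent corner into $W$. The paper is merely terser, treating only one representative case rather than spelling out all four symmetric ones.
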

\begin{proof}
    Let $f_{\delta}=x_{ij}x_{kl}-x_{kj}x_{il}$ be an element of $I(\mcC)$. Suppose that $W \cap V(\delta) \neq \emptyset$. Say $x_{ij} \in W$. Then $x_{kj}x_{il} \in P$. Since $P$ is a prime, it follows that either $x_{kj} \in P$ or $x_{il} \in P$. This implies that $W$ contains the edge either $\{x_{ij},x_{kj}\}$ or $\{x_{ij},x_{il}\}$ of $\delta$.
\end{proof}

\begin{theorem} \label{thm.prime}
    Let $\mathcal{C}$ be an arbitrary set of corner-interval minors and $P$ be a minimal prime ideal of $I(\mcC)$. Then there exists an admissible set $W\subset V(\mcC)$ such that $P=P_W(\mcC)$.
\end{theorem}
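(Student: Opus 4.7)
The plan is to produce the admissible set $W$ directly from $P$ and then show that the candidate ideal $P_W(\mcC)$ sits between $I(\mcC)$ and $P$, so that minimality of $P$ forces equality.

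First I would set $W=\{x_{ij}\mid x_{ij}\in P\}$. Lemma \ref{lemma.admissible} already tells me that this $W$ is admissible, so the ideal $P_W(\mcC)=(W)+J_{\mcC'}$ is defined, where $\mcC'=\{\delta\in\mcC\mid V(\delta)\cap W=\emptyset\}$, and by Remark \ref{remak.prime} it is prime. The clean strategy is then to prove the chain $I(\mcC)\subseteq P_W(\mcC)\subseteq P$ and invoke the minimality of $P$.

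For $I(\mcC)\subseteq P_W(\mcC)$, fix $\delta\in\mcC$. Either $V(\delta)\cap W=\emptyset$, in which case $\delta\in\mcC'$ and $f_\delta\in I(\mcC')\subseteq J_{\mcC'}\subseteq P_W(\mcC)$; or $V(\delta)\cap W\neq\emptyset$, in which case admissibility of $W$ guarantees that $W$ contains an entire edge of $\delta$, and a direct inspection of the two monomials $x_{a_1b_1}x_{a_2b_2}$ and $x_{a_1b_2}x_{a_2b_1}$ of $f_\delta$ shows each contains a variable from that edge, so $f_\delta\in(W)\subseteq P_W(\mcC)$.

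The step I expect to be the main point is $J_{\mcC'}\subseteq P$, since the containment $(W)\subseteq P$ is immediate from the definition of $W$. Here I would use the key equality $J_{\mcC'}=I(\mcC'):x'^\infty$, with $x'=\prod_{x_{ij}\in V(\mcC')}x_{ij}$, which is the content of the proof of Lemma \ref{lemma.primenovar} applied to $\mcC'$. Given any $f\in J_{\mcC'}$, there exists $n\geq 0$ with $x'^n f\in I(\mcC')\subseteq I(\mcC)\subseteq P$. Because each variable occurring in $x'$ lies in $V(\mcC')$, and $V(\mcC')\cap W=\emptyset$ by construction, no such variable lies in $P$; primality of $P$ then forces $f\in P$. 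Thus $J_{\mcC'}\subseteq P$, and combined with $(W)\subseteq P$ this gives $P_W(\mcC)\subseteq P$.

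Finally, since $P_W(\mcC)$ is a prime ideal sandwiched between $I(\mcC)$ and the minimal prime $P$, minimality of $P$ yields $P=P_W(\mcC)$, completing the proof. The only subtlety to keep an eye on is ensuring that the $x'$ used in the saturation argument really consists of variables outside $P$; this is automatic from how $\mcC'$ is defined relative to $W$, but it is worth stating explicitly to make the primality step airtight.
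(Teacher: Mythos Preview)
Your proof is correct and follows essentially the same route as the paper: define $W$ as the variables in $P$, use Lemma~\ref{lemma.admissible} for admissibility, sandwich $P_W(\mcC)$ between $I(\mcC)$ and $P$, and apply the saturation identity $J_{\mcC'}=I(\mcC'):x'^{\infty}$ from Lemma~\ref{lemma.primenovar} together with the fact that no factor of $x'$ lies in $P$. Your write-up is in fact slightly more explicit than the paper's, which phrases the key step as ``$P$ modulo $W$ is a minimal prime of $I(\mcC')$ containing no variable''; your direct saturation argument avoids that detour and makes the primality step transparent.
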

\begin{proof}
    Let $P$ be a minimal prime of $I(\mcC)$. From Lemma \ref{lemma.admissible} it follows that there exists a set $W$ such that $(W,I(\mcC)) \subset P$. From the definition of $W$, one has $(W,I(\mcC))=(W,I(\mcC'))$, where $\mcC'=\{\delta \in \mcC \mid V(\delta)\cap W = \emptyset\}$. From Remark \ref{remak.prime} it follows that the ideal $(W,J_{\mcC'})=P_W(\mcC)$ is a prime ideal containing $(W,I(\mcC'))$. Thus, it suffices to prove that $(W,J_{\mcC'}) \subset P$. Since $\mcC'$ is a corner-interval minors and $P$ modulo $W$ is a minimal prime ideal containing $I(\mcC')$, it follows from Lemma \ref{lemma.primenovar} that $J_{\mcC'} \subset P \mod W$, as desired.
\end{proof}

\begin{lemma} \label{lemma.primecontainment}
    Let $\mathcal{C}$ be an arbitrary corner-interval minors. Let $W$ and $V$ be two admissible sets of $\mcC$, and let $P_W(\mcC)=(W,J_{\mcC'})$ and $P_V(\mcC)=(V,J_{\mcC''})$ be the corresponding prime ideals, where $\mcC'=\{\delta \in \mcC \mid V(\delta)\cap W= \emptyset\}$ and $\mcC''=\{\delta \in \mcC \mid V(\delta)\cap V= \emptyset\}$ (see Definition \ref{def.pw}). Then the following conditions are equivalent:
    \begin{enumerate}
        \item[(a)] $P_W(\mcC) \subset P_V(\mcC)$.
        \item[(b)] $W \subset V$ and for all elements $f_{\sigma} \in J_{\mcC'}\setminus J_{\mcC''}$ one has that the vertices $x_{a_{i_k},b_{j_k}}$ and $x_{a_{i_{l+1}},b_{j_l}}$ belongs to $V$ for some $k,l$, where $\sigma: h_{i_1},v_{j_1},h_{i_2},v_{j_2},\ldots,h_{i_r},v_{i_r}$ a cycle in $G(\mathcal{C'})$, $r\geq 2$ and $i_{r+1}=i_{1}$.
    \end{enumerate}
\end{lemma}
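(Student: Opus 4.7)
The plan is to prove the two implications separately, relying throughout on the fact (established in the discussion preceding Lemma \ref{lemma.primenovar}) that $J_{\mcC'}$ and $J_{\mcC''}$ are toric ideals of bipartite graph edge rings: in particular they are prime, contain no monomial, and can be generated by binomials in variables from $V(\mcC')$ and $V(\mcC'') \subseteq V(\mcC) \setminus V$, respectively. The substitution sending every variable of $V$ to zero therefore restricts $J_{\mcC''}$ to itself inside $S/(V)$, and this specialization is the main technical tool in both directions.

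For (a) $\Rightarrow$ (b), I would first show $W \subseteq V$ via a standard argument: any $x_{ij} \in W$ lies in $P_V$, and if $x_{ij} \notin V$, then the class $\overline{x}_{ij}$ in $S/(V)$ would be a nonzero monomial sitting inside the image of $J_{\mcC''}$, contradicting the monomial-freeness of a toric ideal. For the cycle condition, fix a cycle $\sigma$ in $G(\mcC')$ with $f_\sigma = m_1 - m_2 \in J_{\mcC'} \setminus J_{\mcC''}$, where $m_1 = x_{a_{i_1}b_{j_1}} \cdots x_{a_{i_r}b_{j_r}}$ and $m_2 = x_{a_{i_2}b_{j_1}} \cdots x_{a_{i_1}b_{j_r}}$. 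Since $f_\sigma \in P_W \subseteq P_V = (V) + J_{\mcC''}$, I reduce modulo $(V)$ and split into three cases. If $\overline{m}_1 = \overline{m}_2 = 0$, both monomials share a variable with $V$, which yields exactly the indices $k,l$ required in (b). If exactly one of $\overline{m}_1, \overline{m}_2$ vanishes, then $\overline{f}_\sigma$ would be a nonzero monomial sitting in the toric ideal $J_{\mcC''}$, impossible. If neither vanishes, then $f_\sigma$ involves no variable of $V$, and specializing $V \mapsto 0$ in an expression $f_\sigma = h + g$ with $h \in (V)$ and $g \in J_{\mcC''}$ forces $f_\sigma \in J_{\mcC''}$, contradicting the choice of $f_\sigma$.

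For (b) $\Rightarrow$ (a), the inclusion $W \subseteq V \subseteq P_V$ handles the variable part of $P_W$. For each binomial generator $f_\sigma$ of $J_{\mcC'}$, either $f_\sigma \in J_{\mcC''} \subseteq P_V$ automatically, or condition (b) supplies indices $k, l$ witnessing that both monomials $m_1$ and $m_2$ of $f_\sigma$ contain a variable from $V$; in that case $f_\sigma \in (V) \subseteq P_V$. Either way every generator of $P_W$ lies in $P_V$, yielding $P_W \subseteq P_V$.

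The principal technical obstacle is the last case of the forward direction: verifying that $f_\sigma \in (V) + J_{\mcC''}$ combined with $f_\sigma$ being $V$-free implies $f_\sigma \in J_{\mcC''}$. This specialization step works precisely because the generators of $J_{\mcC''}$ already lie in $K[V(\mcC) \setminus V]$, which in turn is a consequence of the defining condition $\mcC'' = \{\delta \in \mcC : V(\delta) \cap V = \emptyset\}$ for $\mcC''$.
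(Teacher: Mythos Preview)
Your proposal is correct and follows essentially the same approach as the paper's proof: both directions hinge on the fact that $J_{\mcC''}$ is a toric ideal in variables disjoint from $V$, so that passing to $S/(V)$ preserves it and detects membership. The only cosmetic difference is organizational: you run a clean three-case analysis on $(\overline{m}_1,\overline{m}_2)$ to handle the cycle condition, whereas the paper first argues that some variable of $f_\sigma$ must lie in $V$ (else $f_\sigma\in J_{\mcC''}$) and then invokes primality of $P_V(\mcC)$ to force a $V$-variable in the second monomial as well---but the underlying content is identical.
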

\begin{proof}
    $(a)\implies (b)$. Let $x_{ij} \in W$. Since $x_{ij} \in P_W(\mcC) \subset P_V(\mcC)$, it follows that $x_{ij} \in V$. Therefore, $W \subset V$.  Let $f_{\sigma} \in J_{\mcC'}\setminus J_{\mcC''}$, where $\sigma$ a cycle in $G(\mcC')$, be an element of $J_{\mcC'}$. The inclusion $(a)$ implies that for all $f_{\sigma} \in J_{\mcC'}$ we have $f_{\sigma} \in (V,J_{\mcC''})$. Suppose $x_{a_{i_k},b_{j_k}}$ and $x_{a_{i_{k+1}},b_{j_k}}$ does not belongs to $V$ for all $k=1,\ldots,r$, where $i_{r+1}=i_{1}$. Then $f_{\sigma} \in J_{\mcC''}$. Since $\sigma$ a cycle, it follows that  $f_{\sigma}$ is primitive. Therefore, $f_{\sigma} \in J_{\mcC''}$, which is a contradiction to the hypothesis. This implies that there exists some $k$ such that either $x_{a_{i_k},b_{j_k}}$ or $x_{a_{i_{k+1}},b_{j_k}}$ belongs to $V$. Without loss of generality, we assume that $x_{a_{i_k},b_{j_k}} \in V$. Then it follows that $x_{a_{i_2},b_{j_1}}\cdots x_{a_{i_1},b_{j_r}} \in P_V(\mcC)$, since $f_{\sigma} \in P_V(\mcC)$. Since $P_V(\mcC)$ is a prime ideal, one can conclude that 
    $x_{a_{i_{l+1}},b_{j_l}} \in V$ for some $l$.

    $(b)\implies (a)$. Suppose for all $f_{\sigma} \in J_{\mcC'}\setminus J_{\mcC''}$ we have the vertices $x_{a_{i_k},b_{j_k}}$ and $x_{a_{i_{l+1}},b_{j_l}}$ belongs to $V$ for some $k,l$. Then it follows that $f_{\sigma} \in V$. Thus, combined with the inclusion $W \subset V$, it follows that $P_W(\mcC) \subset P_V(\mcC)$.
\end{proof}

We are now in a position to present the main theorem of this section.

\begin{theorem} \label{thm.minimal}
    Let $\mathcal{C}$ be an arbitrary corner-interval minors. Let $W$ and $V$ be two admissible sets of $\mcC$, and let $P_W(\mcC)=(W,J_{\mcC'})$ and $P_V(\mcC)=(V,J_{\mcC''})$ be the corresponding prime ideals. Then $P_V(\mcC)$ is a minimal prime for $I(\mcC)$ if and only if for all admissible subsets $W \subset V$, there exists $f_{\sigma} \in J_{\mcC'}\setminus J_{\mcC''}$ such that the vertices $x_{a_{i_k},b_{j_k}}$ or $x_{a_{i_{l+1}},b_{j_l}}$ does not belongs to $V$, for all $1 \leq k,l \leq r$, where $\sigma: h_{i_1},v_{j_1},h_{i_2},v_{j_2},\ldots,h_{i_r},v_{i_r}$ a cycle in $G(\mathcal{C'})$, $r\geq 2$ and $i_{r+1}=i_1$.
\end{theorem}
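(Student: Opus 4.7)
The plan is to reduce Theorem \ref{thm.minimal} to Theorem \ref{thm.prime} together with Lemma \ref{lemma.primecontainment}. By Theorem \ref{thm.prime}, every minimal prime of $I(\mcC)$ has the form $P_U(\mcC)$ for some admissible set $U$, so $P_V(\mcC)$ is a minimal prime if and only if there is no admissible set $W$ with $P_W(\mcC) \subsetneq P_V(\mcC)$. I would then argue that such a strict containment corresponds exactly to the case $W \subsetneq V$ with $P_W(\mcC) \subseteq P_V(\mcC)$. Indeed, Lemma \ref{lemma.primecontainment}(b) already forces $W \subseteq V$ from any inclusion $P_W(\mcC) \subseteq P_V(\mcC)$; if $W = V$ then $\mcC' = \mcC''$ and the two ideals coincide, so strictness excludes this. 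Conversely, whenever $W \subsetneq V$ and $P_W(\mcC) \subseteq P_V(\mcC)$, the inclusion is automatically strict, because any variable $x_{ij} \in V \setminus W$ lies in $P_V(\mcC)$ but not in $P_W(\mcC)$: it is not in $W$, and the toric ideal $J_{\mcC'}$ has no variable generator, being generated by binomials $f_\sigma$.

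The next step is to apply Lemma \ref{lemma.primecontainment}(b) to turn the non-containment $P_W(\mcC) \not\subseteq P_V(\mcC)$ into a combinatorial statement. Since $W \subseteq V$ is already guaranteed for the candidate $W \subsetneq V$, the failure of (b) is just the negation of its cycle condition: there exists $f_\sigma \in J_{\mcC'} \setminus J_{\mcC''}$, with $\sigma\colon h_{i_1},v_{j_1},\ldots,h_{i_r},v_{i_r}$ a cycle in $G(\mcC')$, such that for every $1 \leq k, l \leq r$ at least one of $x_{a_{i_k}b_{j_k}}$ and $x_{a_{i_{l+1}}b_{j_l}}$ fails to lie in $V$. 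Combining the two steps yields exactly the biconditional of Theorem \ref{thm.minimal}.

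I do not anticipate any serious obstacle; once Theorem \ref{thm.prime} and Lemma \ref{lemma.primecontainment} are in hand, the proof is essentially a routine negation of quantifiers. The only point requiring care is verifying that strict inclusion between the ideals tracks strict inclusion between the admissible sets, which uses in an essential way that $J_{\mcC'}$ is a toric (hence binomial) ideal containing no variable. Keeping this correspondence straight, together with unambiguously interpreting ``$W \subset V$'' in the theorem as strict inclusion (the case $W = V$ would make the existential clause vacuously false), is the main bookkeeping task.
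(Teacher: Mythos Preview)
Your proposal is correct and follows exactly the paper's approach: the paper's proof of Theorem \ref{thm.minimal} is the single sentence ``It follows from Theorem \ref{thm.prime} and Lemma \ref{lemma.primecontainment},'' and your write-up simply unpacks this reference, including the bookkeeping that strict containment of the prime ideals matches strict containment of the admissible sets (using that $J_{\mcC'}$ contains no variable) and the routine negation of the cycle condition in Lemma \ref{lemma.primecontainment}(b).
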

\begin{proof}
    It follows from Theorem \ref{thm.prime} and Lemma \ref{lemma.primecontainment}. 
\end{proof}

\section{Corner minors} \label{sec.cor}

In this section, we focus on binomial ideals generated by corner minors, with the objective of characterizing their radicality. As a first step, we examine the structure of their minimal prime ideals below.

\begin{lemma} \label{lemma.w11}
    Let $\mcC$ be an arbitrary corner minors of $M$. Let $W$ be an admissible set of $\mcC$ such that $P_W(\mcC)$ is a minimal prime of $I(\mcC)$. If the admissible set $W \neq \emptyset$, then $x_{11} \in W$. In particular, if $W \neq \emptyset$ then the minimal prime $P_W(\mcC)$ is generated by variables.
\end{lemma}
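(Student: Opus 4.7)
The plan is to exploit the fact that $x_{11}$ is a vertex of every corner minor and that each generator $f_\delta = x_{11}x_{ab} - x_{1b}x_{a1}$ is linear in $x_{11}$, which makes $x_{11}$ the natural variable at which to localize. The first step is to show that $I(\mcC):x_{11}^{\infty} = J_{\mcC}$. After inverting $x_{11}$, each relation $f_\delta = 0$ determines the interior variable via $x_{ab} = x_{1b}x_{a1}/x_{11}$, so in $S_{x_{11}}/I(\mcC)S_{x_{11}}$ all interior variables are eliminated. What remains is a polynomial ring in $x_{11}$ together with the boundary variables $x_{1j}, x_{i1}$ appearing in $V(\mcC)$, localized at $x_{11}$; this is a domain, so $I(\mcC):x_{11}^{\infty}$ is prime. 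It contains $I(\mcC)$ and is contained in $J_{\mcC}$ (because $x_{11}\notin J_{\mcC}$), so the minimality of $J_{\mcC}$ from Lemma~\ref{lemma.primenovar} forces equality.

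Second, I would argue by contradiction: suppose $W\neq\emptyset$ and $x_{11}\notin W$. Since $J_{\mcC'}$ is a toric ideal generated by pure binomials, it contains no variable, so the prime ideal $P_W(\mcC) = (x_{ij}\mid x_{ij}\in W) + J_{\mcC'}$ contains exactly the variables listed in $W$. In particular $x_{11}\notin P_W(\mcC)$, hence $P_W(\mcC):x_{11}^{\infty} = P_W(\mcC)$, and combining with the first step gives
\begin{equation*}
J_{\mcC} \;=\; I(\mcC):x_{11}^{\infty} \;\subset\; P_W(\mcC):x_{11}^{\infty} \;=\; P_W(\mcC).
\end{equation*}
The minimality of $P_W(\mcC)$ together with $J_{\mcC}\supset I(\mcC)$ then forces $P_W(\mcC) = J_{\mcC}$, which is impossible since $P_W(\mcC)$ contains a variable from $W\neq\emptyset$ while $J_{\mcC}$ contains none. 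Hence $x_{11}\in W$.

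For the ``in particular'' clause, once $x_{11}\in W$, every corner minor $\delta$ satisfies $x_{11}\in V(\delta)\cap W$, so $\mcC' = \{\delta\in\mcC \mid V(\delta)\cap W = \emptyset\} = \emptyset$, $J_{\mcC'} = (0)$, and $P_W(\mcC) = (x_{ij}\mid x_{ij}\in W)$ is generated by variables.

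The main obstacle is the domain assertion in the first step. It requires carefully checking that the substitutions $x_{ab}\mapsto x_{1b}x_{a1}/x_{11}$ are mutually consistent and that no further relations survive among the boundary variables in $S_{x_{11}}/I(\mcC)S_{x_{11}}$, so that the quotient really is a localized polynomial ring rather than something with hidden relations. Once this explicit presentation is established, the rest reduces to the short minimal-prime bookkeeping above.
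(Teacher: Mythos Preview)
Your proof is correct and takes a genuinely different route from the paper. The paper argues combinatorially: assuming $x_{11}\notin W$, it shows that $W$ must contain some boundary variable $x_{i1}$ (or $x_{1j}$), then that $W$ in fact contains the entire $i$-th row $U$ of $V(\mcC)$, and finally invokes Lemma~\ref{lemma.primecontainment} to verify $P_{W\setminus U}(\mcC) \subset P_W(\mcC)$, contradicting minimality. Your approach instead sharpens Lemma~\ref{lemma.primenovar} for corner minors to the identity $I(\mcC):x_{11}^\infty = J_{\mcC}$, exploiting the fact that each interior variable $x_{ab}$ occurs in exactly one generator $x_{11}x_{ab}-x_{1b}x_{a1}$, so $S_{x_{11}}/I(\mcC)S_{x_{11}}$ is visibly a localized polynomial ring and hence a domain; the rest is a two-line minimality argument. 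Regarding the ``obstacle'' you flag, there is in fact no consistency issue: distinct corner minors involve distinct interior variables, so the eliminations are independent and no hidden relations among the boundary variables can arise. Your route is shorter and yields the stronger saturation identity as a byproduct; the paper's route stays closer to the combinatorics of admissible sets and reuses Lemma~\ref{lemma.primecontainment}. Both reach the ``in particular'' clause by the same one-line observation that $x_{11}$ lies in every $V(\delta)$, forcing $\mcC'=\emptyset$.
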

 
\begin{proof}
    Let $W \neq \emptyset$ and $P_W(\mcC)$ a minimal prime of $I(\mcC)$. Suppose $x_{11} \notin W$ and $x_{ij} \in W$ with $i>1$ and $j>1$. Since $x_{11}x_{ij}-x_{1j}x_{i1} \subset I(\mcC) \subset P_W(\mcC)$, and since $x_{ij} \in P_W(\mcC)$ it follows that $x_{1j}x_{i1} \in P_W(\mcC)$. Since $P_W(\mcC)$ is prime, either $x_{i1} \in P_W(\mcC)$ or $x_{1j} \in P_W(\mcC)$.
    
    Without loss of generality, assume that $x_{i1} \in W \subset V(\mcC)$, for some fixed $i \neq 1$. Since $x_{11}x_{ij}-x_{1j}x_{i1} \subset I(\mcC) \subset P_W(\mcC)$, and since $x_{i1} \in P_W(\mcC)$, it follows that $x_{11}x_{ij} \in P_W(\mcC)$. Thus, since $P_W(\mcC)$ is prime and $x_{11}\notin P_W(\mcC)$, one has $x_{ij} \in W$. Using a similar argument, we conclude that for the fixed $i$, all $x_{ij} \in V(\mcC)$ satisfy $x_{ij} \in W$. Let $U=\{x_{kl} \mid k=i, l \in [n] \text{ and }  x_{kl} \in V(\mcC)\}$. Note that $U$ is a maximal horizontal interval of $\mcC$ and $U \subseteq W$. Let $\mcC'=\{\delta \in \mcC \mid V(\delta)\cap W= \emptyset\})$ and $\mcC''=\{\delta \in \mcC \mid V(\delta)\cap W\setminus U= \emptyset\})$. Consider the cycle $\sigma: h_{i_1},v_{j_1},h_{i_2},v_{j_2},\ldots,h_{i_r},v_{i_r}$ in $G(\mcC'')$. We will show that $f_\sigma \in P_W(\mcC)$. 
    
    Case I. If $h_{i_k} \neq u$ for all $k$, where $u$ is a vertex of $G(\mcC)$ associated with the maximal horizontal interval $U$, then one has $\sigma \in G(\mcC')$. 
    
    Case II. If $h_{i_k} = u$ for some $k$, then the vertices $x_{a_{i_k}b_{j_k}}$ and $x_{a_{i_{k}}b_{j_{k+1}}}$ belongs to $W$, where $j_{r+1}=j_{1}$. Because the vertices $x_{a_{i_k}b_{j_k}}$ and $x_{a_{i_{k}}b_{j_{k+1}}}$ belongs to $U$.
    
    Then, from Lemma \ref{lemma.primecontainment} it follows that $P_{W\setminus U}(\mcC) \subset  P_W(\mcC)$. This contradicts the minimality of the prime ideal $P_W(\mcC)$. 

    The second part of the assertion follows from Lemma \ref{lemma.admissible}, since for every element $x_{11}x_{ij}-x_{1j}x_{i1}$ in $I(\mcC)$, either $x_{11},x_{i1} \in W$ or $x_{11},x_{1j} \in W$. Therefore, there are no cycles in $G(\mcC')$, where $\mcC'=\{\delta \in \mcC \mid V(\delta)\cap W= \emptyset\} = \emptyset$, which implies that $P_W(\mcC)$ is generated by variables. This completes the proof.
\end{proof}

\begin{notation}
    Let $\mcC$ be an arbitrary collection of corner-minors of the matrix $M$. We denote by $\mathcal{W}$ the set of all admissible subsets $W\subset V(\mcC)$ such that the ideal $P_W(\mcC)$ is a minimal prime ideal of $I(\mcC)$. 
\end{notation}

\begin{corollary} \label{cor.intersW}
    Let $\mcC$ be an arbitrary collection of corner minors of $M$. Let $W$ be an admissible set of $\mcC$ such that $P_W(\mcC)$ is a minimal prime of $I(\mcC)$. Then
    \begin{equation*}
        \bigcap_{W\in \mathcal{W} \setminus \emptyset} P_W(\mcC) = (x_{11})+(\ini(f_\delta) \mid \delta \in \mcC).
    \end{equation*}
\end{corollary}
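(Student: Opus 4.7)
The plan is to use Lemma \ref{lemma.w11}, which ensures that every $W\in\mathcal{W}\setminus\{\emptyset\}$ contains $x_{11}$ and that $P_W(\mcC)=(W)$ is a monomial prime generated by variables. Under the standard antidiagonal term order, $\ini(f_\delta)=x_{1j}x_{i1}$ for each corner minor $\delta=[1,i|1,j]\in\mcC$, so both sides of the claimed equality are squarefree monomial ideals, and it suffices to check containment of individual monomials.

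For the containment $\supseteq$, fix $W\in\mathcal{W}\setminus\{\emptyset\}$. Then $x_{11}\in W\subseteq P_W$. For each $\delta=[1,i|1,j]\in\mcC$, the binomial $f_\delta=x_{11}x_{ij}-x_{1j}x_{i1}$ lies in $I(\mcC)\subseteq P_W$, and since $x_{11}x_{ij}\in(x_{11})\subseteq P_W$, we also have $\ini(f_\delta)=x_{1j}x_{i1}\in P_W$. Intersecting over all such $W$ yields $(x_{11})+(\ini(f_\delta)\mid\delta\in\mcC)\subseteq\bigcap_{W}P_W(\mcC)$.

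For the reverse containment, take a monomial $m$ in the intersection and assume $x_{11}\nmid m$; the goal is to show $\ini(f_\delta)\mid m$ for some $\delta\in\mcC$. Suppose toward a contradiction that this fails. Then for every $\delta=[1,i|1,j]\in\mcC$ at least one of $x_{1j}$ or $x_{i1}$ does not divide $m$, so we may pick $c_\delta\in\{x_{1j},x_{i1}\}$ with $c_\delta\nmid m$. The set $W_0=\{x_{11}\}\cup\{c_\delta:\delta\in\mcC\}$ is admissible, since each pair $\{x_{11},c_\delta\}$ is an edge of $\delta$ contained in $W_0\cap V(\delta)$. Iteratively removing any $c_\delta\in W_0$ whose deletion still preserves admissibility at every minor yields a minimal admissible subset $W_0'\subseteq W_0$ with $x_{11}\in W_0'$; by Theorem \ref{thm.prime} and the criterion of Theorem \ref{thm.minimal}, such a minimally admissible $W_0'$ corresponds to an element of $\mathcal{W}\setminus\{\emptyset\}$. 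Then $m\in P_{W_0'}(\mcC)=(W_0')$ forces some variable of $W_0'\subseteq W_0$ to divide $m$, contradicting both $x_{11}\nmid m$ and the choice of the $c_\delta$.

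The main technical hurdle is the pruning step, namely certifying that the reduced admissible set $W_0'$ actually produces a minimal prime and is not merely sandwiched above another admissible prime. Handling this requires combining the combinatorial minimality condition of Theorem \ref{thm.minimal} with the containment criterion of Lemma \ref{lemma.primecontainment}, which together translate minimality of the admissible set into minimality of the associated prime ideal.
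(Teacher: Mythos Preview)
Your containment $\supseteq$ is correct and clean. The gap lies in the pruning step for $\subseteq$, and it is a genuine one that your appeal to Theorem~\ref{thm.minimal} and Lemma~\ref{lemma.primecontainment} does not close.

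You construct an admissible set $W_0\subseteq H_1\cup V_1$ containing $x_{11}$, none of whose variables divides $m$, and then pass to a subset $W_0'$ that is minimal among admissible sets containing $x_{11}$. You then assert $W_0'\in\mathcal W\setminus\{\emptyset\}$, i.e.\ that $P_{W_0'}=(W_0')$ is a \emph{minimal} prime of $I(\mcC)$. Lemma~\ref{lemma.w11} gives one implication: every nonempty $W\in\mathcal W$ contains $x_{11}$ and (one checks directly) is minimal among admissible sets containing $x_{11}$. But you need the converse, and it can fail. Indeed, $(W_0')$ is a prime containing $I(\mcC)$, so some minimal prime $P_{W'}$ of $I(\mcC)$ sits inside it; if $W'\neq\emptyset$ then by Lemma~\ref{lemma.w11} we would get an admissible $W'\ni x_{11}$ with $(W')\subsetneq(W_0')$, contradicting the minimality of $W_0'$. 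Thus the only obstruction is $W'=\emptyset$, i.e.\ $J_\mcC\subseteq(W_0')$. Nothing in your argument excludes this: the criterion of Theorem~\ref{thm.minimal} applied with the smaller admissible set $\emptyset\subsetneq W_0'$ asks precisely whether some $f_\sigma\in J_{\mcC}$ has neither monomial meeting $W_0'$, and you have not verified this.

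This obstruction is not merely technical. Take $\mcC=\{[1,2\mid 1,2],\,[1,3\mid 1,3]\}$: here $G(\mcC)$ has only two primitive $4$-cycles, both already giving generators of $I(\mcC)$, so $I(\mcC)=J_\mcC$ is prime and $\mathcal W\setminus\{\emptyset\}=\emptyset$. The left-hand side of the corollary is then the whole ring $S$, while the right-hand side is $(x_{11},\,x_{12}x_{21},\,x_{13}x_{31})\neq S$. Every minimal admissible $W_0'\ni x_{11}$ (for instance $\{x_{11},x_{12},x_{13}\}$) satisfies $J_\mcC\subseteq(W_0')$ here, so your pruning procedure never lands in $\mathcal W$. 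Hence the corollary as literally stated requires an extra hypothesis---at minimum $\mathcal W\setminus\{\emptyset\}\neq\emptyset$, and in fact that $J_\mcC\not\subseteq(W)$ for every minimal admissible $W\ni x_{11}$---and your pruning step is exactly where such a hypothesis must enter. The paper itself offers no proof of this corollary to compare against; to make your route go through you must either assume or establish that $J_\mcC\not\subseteq(W_0')$ for the sets $W_0'$ you produce.
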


\begin{lemma} \label{lemma.x11f}
    Let $\mcC$ be an arbitrary set of corner minors of $M$, and let $\sigma$ be a cycle in the associated graph $G(\mcC)$. Denote by $H_1$ and $V_1$ the maximal horizontal and vertical intervals of $\mcC$ that contain the vertex $x_{11}$, respectively. Then $x_{11}f_\sigma \in I(C)$ if and only if $\sigma \cap v_1 \neq \emptyset$ or $\sigma \cap h_1 \neq \emptyset$.
\end{lemma}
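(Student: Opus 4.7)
The plan is to prove the two implications directly, with a chord-decomposition induction for the forward direction and a connectivity analysis in $G_{\mathcal{B}_\mcC}$ (via Remark \ref{rem.bwalk}) for the converse.

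For the forward direction ($\Leftarrow$), I would argue by induction on $r$ where $\sigma$ has length $2r$, assuming WLOG that $\sigma$ meets $v_1$ (the $h_1$ case is symmetric by transposition of rows and columns). In the base case $r=2$ the cycle is $\sigma\colon h_{i_1}, v_1, h_{i_2}, v_{j_2}$ with $j_2\neq 1$, giving
$$f_\sigma = x_{a_{i_1},1}\,x_{a_{i_2},b_{j_2}} - x_{a_{i_2},1}\,x_{a_{i_1},b_{j_2}}.$$
If $h_1\in\{h_{i_1},h_{i_2}\}$, then one of $a_{i_1},a_{i_2}$ equals $1$ and $f_\sigma$ is itself a corner-minor generator in $I(\mcC)$. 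Otherwise, the corner minors $\delta_k=[1,a_{i_k}\mid 1,b_{j_2}]$ ($k=1,2$) lie in $\mcC$, since $x_{a_{i_k},b_{j_2}}\in H_{i_k}\cap V_{j_2}\subset V(\mcC)$, and a direct expansion yields
$$x_{11}f_\sigma = x_{a_{i_1},1}\,f_{\delta_2} - x_{a_{i_2},1}\,f_{\delta_1} \in I(\mcC).$$

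For the inductive step $r\geq 3$, Remark \ref{remark.corner} supplies a chord $\{h_{i_3},v_1\}$ (after cyclically shifting so $v_{j_1}=v_1$). This splits $\sigma$ into a $4$-cycle $\sigma_1\colon h_{i_2},v_{j_2},h_{i_3},v_1$ and a $(2r-2)$-cycle $\sigma_2\colon h_{i_1},v_1,h_{i_3},v_{j_3},\dots,h_{i_r},v_{j_r}$, both passing through $v_1$. A straightforward expansion produces the identity
$$f_\sigma \;=\; x_{a_{i_2},b_{j_2}}\,f_{\sigma_2} \;+\; \Bigl(\prod_{k=3}^{r} x_{a_{i_{k+1}},b_{j_k}}\Bigr)\,f_{\sigma_1} \qquad (i_{r+1}=i_1),$$
so multiplying by $x_{11}$ and applying the inductive hypothesis to $\sigma_1,\sigma_2$ gives $x_{11}f_\sigma\in I(\mcC)$.

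For the converse ($\Rightarrow$), I would establish the contrapositive via Remark \ref{rem.bwalk}. Since $H_1$ (resp.\ $V_1$) is the unique maximal horizontal (resp.\ vertical) interval of $\mcC$ meeting row $1$ (resp.\ column $1$), the assumption that $\sigma$ avoids both $v_1$ and $h_1$ forces $a_{i_k},b_{j_k}\geq 2$ for every $k$. Hence both monomials $m^\pm$ of $f_\sigma$ are products of ``inner'' variables $x_{ab}$ with $a,b\geq 2$. It suffices to show that the exponent vectors of $x_{11}m^+$ and $x_{11}m^-$ lie in distinct connected components of $G_{\mathcal{B}_\mcC}$. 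A move in $G_{\mathcal{B}_\mcC}$ uses some $\delta=[1,a\mid 1,b]\in\mcC$ and swaps $x_{11}x_{ab}$ with $x_{1b}x_{a1}$. From $x_{11}m^+$ no ``reverse'' move is possible, since no factor $x_{1b}$ or $x_{a1}$ with $a,b\geq 2$ is present; so every neighbour has the form $N_\delta := x_{1b}x_{a1}\cdot m^+/x_{ab}$ for some $\delta\in\mcC$ with $x_{ab}\mid m^+$. From such $N_\delta$ no forward move is feasible (no $x_{11}$), and the only row-$1$ (resp.\ column-$1$) factor is $x_{1b}$ (resp.\ $x_{a1}$), so the unique available reverse move simply undoes $\delta$ and returns to $x_{11}m^+$. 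The component of $x_{11}m^+$ is therefore the ``star'' $\{x_{11}m^+\}\cup\{N_\delta\}_\delta$, and symmetrically for $x_{11}m^-$. Simplicity of $\sigma$ ensures the rows $a_{i_k}$ (resp.\ columns $b_{j_k}$) are distinct, whence $m^+\neq m^-$; comparing row-$1$ and column-$1$ factors rules out any coincidence between the two stars, so they are disjoint and $x_{11}f_\sigma\notin I(\mcC)$.

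The main technical obstacle is the algebraic identity in the inductive step of the forward direction, which requires careful bookkeeping of the monomials produced by the chord decomposition; the converse, while conceptually clean, depends on the crucial observation that any monomial reachable from $x_{11}m^+$ carries at most one row-$1$ factor and at most one column-$1$ factor, which is what rigidifies the connected component into a star.
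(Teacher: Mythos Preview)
Your proof is correct and follows essentially the same two-part strategy as the paper's: an explicit algebraic identity expressing $x_{11}f_\sigma$ in terms of corner-minor generators for the direction $\Leftarrow$, and a $\mathcal{B}_\mcC$-walk connectivity argument via Remark~\ref{rem.bwalk} for $\Rightarrow$. The one substantive difference lies in the forward direction: the paper invokes Remark~\ref{remark.corner} to jump directly to $r=2$ whenever $\sigma$ meets $v_1$ or $h_1$ (tacitly restricting to chordless cycles, since that remark only guarantees a \emph{chord} when $r>2$), whereas your explicit induction on $r$ via the chord decomposition $f_\sigma = x_{a_{i_2},b_{j_2}} f_{\sigma_2} + (\prod_{k\geq 3} x_{a_{i_{k+1}},b_{j_k}}) f_{\sigma_1}$ handles arbitrary simple cycles cleanly and thereby fills that gap. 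Your converse is identical in substance to the paper's $\alpha \to \alpha_k \to \alpha$ loop analysis, simply rephrased as a ``star'' description of the connected component of $x_{11}m^{\pm}$ in $G_{\mathcal{B}_\mcC}$.
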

\begin{proof}
Let $\sigma: h_{i_1},v_{j_1},h_{i_2},\ldots, h_{i_r},v_{j_r}$ be a cycle in $G(\mcC)$, where $r\geq 2$.  Suppose $\sigma \cap v_1 \neq \emptyset$ or $\sigma \cap h_1 \neq \emptyset$. Then from Remark \ref{remark.corner} it follows that $r=2$. Without loss of generality, assume that $\sigma \cap v_1 \neq \emptyset$ and $v_{j_1}=v_1$. Let $\sigma: h_{i_1},v_{1},h_{i_2},v_{j_2}$ be a cycle in $G(\mcC)$. Since $x_{a_{i_1}b_{j_2}}$ and $x_{a_{i_2}b_{j_2}}$ belongs to $V(\mcC)$, it follows that $\sigma_1: h_{1},v_{1},h_{i_2},v_{j_2}$ and $\sigma_1: h_{1},v_{1},h_{i_1},v_{j_2}$ are cycles in $G(\mcC)$ (in particular, $f_{\sigma_1}$ and $f_{\sigma_2}$ belongs to $I(\mcC)$). Then $x_{a_1,b_1}f_\sigma=x_{a_{i_1},b_1}f_{\sigma_1}-x_{a_{i_2},b_1}f_{\sigma_2}$, where $a_1=1$ and $b_1 =1$. Therefore, we have $x_{11}f_\sigma \in I(C)$.

Conversely, suppose that $\sigma \cap v_1 = \emptyset$ or $\sigma \cap h_1 = \emptyset$ and $x_{11}f_{\sigma} \in I(\mcC)$. Remark \ref{rem.bwalk} implies that there exits a $\mathcal{B}_\mcC$-walk from $\alpha=x_{11}x_{a_{i_1},b_{j_1}}\cdots x_{a_{i_r},b_{j_r}}$ to $\beta = x_{11}x_{a_{i_1},b_{j_r}}x_{a_{i_2},b_{j_1}} \cdots x_{a_{i_{r}},b_{j_{r-1}}}$. Since $x_{11}$ is the only variable in $\alpha$ which intersects with $H_1$ and $V_1$, it follows that the only possible $\mathcal{B}_\mcC$-walk from $\alpha$ is the following.
\begin{equation*}
    \alpha \rightarrow \alpha_k = x_{a_{i_k},1}x_{1,b_{j_k}}x_{a_{i_1},b_{j_1}}\cdots x_{a_{i_{k-1}},b_{j_{k-1}}}x_{a_{i_{k+1}},b_{j_{k+1}}}\cdots x_{a_{i_r},b_{j_r}}
\end{equation*}
for some $1\leq k \leq r$. Since $x_{a_{i_k},1}$ is the only variable in $\alpha_k$ which intersects with $V_1$ and $x_{1,b_{j_k}}$ is the only variable in $\alpha_k$ which intersects with $H_1$, it follows that the only possible  $\mathcal{B}_\mcC$-walk from $\alpha_k$ is the walk $\alpha_k \rightarrow \alpha$. Therefore, every $\mathcal{B}_\mcC$-walk from $\alpha$ returns to $\alpha$ via $\alpha_k$, forming a loops of the form
\begin{equation*}
    \alpha \rightarrow \alpha_k \rightarrow \alpha
\end{equation*}
for all $1\leq k \leq r$. Note that $\alpha_k \neq \beta$ for all $k$. This contradicts the existence of a $\mathcal{B}_\mcC$-walk from $\alpha$ to $\beta$. Hence, the theorem is proved.
\end{proof}

Since the ideal $P_W(\mcC)$ is generated by variables when $W\neq \emptyset$, we can explicitly compute their intersection in the following theorem. 

\begin{theorem} \label{lemma.rad}
    Let $\mcC$ be an arbitrary collection of corner minors of $M$.  Let $W$ be an admissible set of $\mcC$ such that $P_W(\mcC)$ is a minimal prime of $I(\mcC)$. Then one has
    \begin{equation} \label{eq.pw}
        \bigcap_{W \in \mathcal{W}} P_W(\mcC)=I(\mcC)+(x_{11}f_\sigma \mid \sigma \text{ a cycle in } G(\mcC) \text{ such that } \sigma \cap v_1 = \emptyset \text{ and } \sigma \cap h_1 = \emptyset),
    \end{equation} where $H_1$ and $V_1$ are maximal horizontal and maximal vertical intervals of $\mcC$ containing the vertex $x_{11}$, respectively.
\end{theorem}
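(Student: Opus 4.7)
The plan is to prove the two containments separately, with the nontrivial direction reducing, via Corollary~\ref{cor.intersW}, to a single ``cancellation of $x_{11}$'' inside the toric prime $J_{\mcC}$.

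For ``$\supseteq$'', I would show that every listed generator on the right lies in each $P_W(\mcC)$. The inclusion $I(\mcC)\subset P_W(\mcC)$ is built into the definition of the minimal primes, so only the generators $x_{11}f_\sigma$ need attention, and these split into two cases: when $W=\emptyset$, $P_\emptyset(\mcC)=J_{\mcC}$ already contains $f_\sigma$; when $W\neq\emptyset$, Lemma~\ref{lemma.w11} supplies $x_{11}\in W\subset P_W(\mcC)$.

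For ``$\subseteq$'', let $f$ belong to the intersection. If $\mathcal{W}=\{\emptyset\}$ then $I(\mcC)=J_{\mcC}$ and both sides collapse to $I(\mcC)$; otherwise, Corollary~\ref{cor.intersW} yields
\[
f\in (x_{11})+(\ini(f_\delta)\mid\delta\in\mcC).
\]
For each corner minor $\delta=[1,a_2\mid 1,b_2]$, the two monomials $x_{11}x_{a_2 b_2}$ and $x_{1 b_2}x_{a_2 1}$ of $f_\delta$ differ by $f_\delta\in I(\mcC)$, so regardless of the chosen term order $\ini(f_\delta)\in(x_{11})+I(\mcC)$. Hence $(x_{11})+(\ini(f_\delta)\mid\delta\in\mcC)$ collapses modulo $I(\mcC)$ to $(x_{11})$, and I can write $f=e+x_{11}F$ with $e\in I(\mcC)$ and $F\in S$.

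The main obstacle --- and the conceptual heart of the proof --- is the next step. Since $f\in J_{\mcC}$ (taking $W=\emptyset$) and $e\in I(\mcC)\subset J_{\mcC}$, one has $x_{11}F\in J_{\mcC}$; because $J_{\mcC}$ is a toric prime in the polynomial ring $S$, it contains no variable, so $x_{11}\notin J_{\mcC}$ and primality forces $F\in J_{\mcC}$. Consequently $x_{11}F$ lies in $x_{11}J_{\mcC}$, which is generated by the products $x_{11}f_\sigma$ as $\sigma$ ranges over the cycles of $G(\mcC)$. To match the generating set in~\eqref{eq.pw}, I would finally invoke Lemma~\ref{lemma.x11f}: any cycle $\sigma$ meeting $v_1$ or $h_1$ already satisfies $x_{11}f_\sigma\in I(\mcC)$, so such generators are redundant and may be absorbed, leaving precisely the cycles required by the statement.
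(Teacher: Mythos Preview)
Your argument is correct and follows the same architecture as the paper's: the easy inclusion via Lemma~\ref{lemma.w11}, then for ``$\subseteq$'' the combination of Corollary~\ref{cor.intersW} with membership in $P_\emptyset(\mcC)=J_\mcC$, finishing with Lemma~\ref{lemma.x11f}. The one place you diverge is the passage from $f\in J_\mcC\cap\bigl((x_{11})+(\ini f_\delta\mid\delta\in\mcC)\bigr)$ to $f\in I(\mcC)+x_{11}J_\mcC$. The paper writes a mixed decomposition $g=\sum x_{11}a_\sigma f_\sigma+\sum x_{i1}x_{1j}b_{ij\sigma'}f_{\sigma'}$ and then applies the identity $x_{i1}x_{1j}f_{\sigma'}=x_{ij}x_{11}f_{\sigma'}-f_{\sigma'}(x_{11}x_{ij}-x_{i1}x_{1j})$ to push the second sum into $I(\mcC)+x_{11}J_\mcC$; you instead first collapse $(\ini f_\delta)$ into $(x_{11})+I(\mcC)$, write $f=e+x_{11}F$, and cancel $x_{11}$ using that $J_\mcC$ is prime and contains no variable. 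Your route is cleaner and sidesteps having to justify that particular decomposition. One minor caveat: your handling of the degenerate case $\mathcal{W}=\{\emptyset\}$ asserts $I(\mcC)=J_\mcC$, but having a unique minimal prime only gives $\sqrt{I(\mcC)}=J_\mcC$, so this step would need a word more of justification; the paper, for its part, does not address this case at all.
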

\begin{proof}
    It is easy to see that $I(\mcC) \subset \bigcap_{W \in \mathcal{W}} P_W(\mcC)$, since $P_W(\mcC)$'s are minimal prime ideals of $I(\mcC)$. From Lemma \ref{lemma.w11} it follows that $x_{11} \in \bigcap_{W\in \mathcal{W} \setminus \emptyset} P_W(\mcC)$. This implies that $x_{11}f_\sigma \in \bigcap_{W\in \mathcal{W} \setminus \emptyset} P_W(\mcC)$. Since $\sigma$ is a cycle in $G(\mcC)$, it follows that $f_\sigma \in P_\emptyset(\mcC)$. Thus, it follows that $x_{11}f_\sigma \in P_\emptyset(\mcC)$. Therefore, the right hand side of Equation (\ref{eq.pw}) contained in the left hand side. For inclusion on the other side, consider $g \in  \bigcap_{W \in \mathcal{W}} P_W(\mcC)$. Since $g \in P_\emptyset(\mcC)$ and $g \in \bigcap_{W\in \mathcal{W} \setminus \emptyset}P_W(\mcC)$, then using Corollary \ref{cor.intersW}, we can write $g$ as 
    $$g=\sum  x_{11}a_\sigma f_{\sigma} + \sum x_{i1}x_{1j}b_{ij\sigma'}f_{\sigma'},$$
    where $x_{i1}x_{1j}=\ini_<(f_\delta)$ for some $\delta \in \mcC, a_\sigma, b_{ij\sigma'} \in S$, and $\sigma,\sigma'$ are cycles in $G(\mcC)$. Then using Lemma \ref{lemma.x11f} we can rewrite $g$ as 
    $$g=\sum a_\delta f_\delta +\sum  x_{11}a_\sigma'' f_{\sigma''} + \sum x_{i1}x_{1j}b_{ij\sigma'}f_{\sigma'},$$
    where $a_\delta,a_\sigma'' \in S$, $\delta \in \mcC$, and $\sigma''$ is a cycle in $G(\mcC)$ such that $\sigma'' \cap v_1 = \emptyset$  and  $\sigma'' \cap h_1 = \emptyset$. Since $x_{i1}x_{1j}f_{\sigma'}=x_{ij}x_{11}f_{\sigma'}-f_{\sigma'}(x_{11}x_{ij}-x_{i1}x_{1j})$, it follows that $\sum x_{i1}x_{1j}b_{ij\sigma'}f_{\sigma'} \in I(\mcC) +  (x_{11} f_{\sigma''} \mid \sigma''$ is a cycle in $G(\mcC)$ such that $\sigma'' \cap v_1 = \emptyset$  and  $\sigma'' \cap h_1 = \emptyset)$. Thus, the containment follows. Hence, the equality.
\end{proof}

As a consequence, we provide a characterization of the radicality and the primary decomposition of the ideal generated by an arbitrary set of $2$-corner minors of $M$, as below.

\begin{corollary} \label{thm.rad}
    Let $\mcC$ be an arbitrary corner minors of $M$. Denote by $H_1$ and $V_1$ the maximal horizontal and vertical intervals that contain the vertex $x_{11}$, respectively. Then $I(\mcC)$ is radical if and only if $G(\mcC)$ does not have a cycle in the induced subgraph $G(\mcC)\setminus v_1,h_1$. Moreover, if $G(\mcC)$ does not have a cycle in the induced subgraph $G(\mcC)\setminus v_1,h_1$ then $ \bigcap_{W \in \mathcal{W}}P_W(\mcC)$ is the primary decomposition of $I(\mcC)$. 
\end{corollary}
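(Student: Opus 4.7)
The plan is to obtain the corollary as a direct synthesis of Theorem \ref{lemma.rad} and Lemma \ref{lemma.x11f}. First I would note, via Theorem \ref{thm.prime} together with the definition of $\mathcal{W}$, that the minimal primes of $I(\mcC)$ are precisely the ideals $\{P_W(\mcC) : W\in \mathcal{W}\}$. Consequently,
\[
\sqrt{I(\mcC)}=\bigcap_{W\in \mathcal{W}} P_W(\mcC),
\]
so $I(\mcC)$ is radical if and only if it coincides with this intersection.

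Next, I would apply the explicit description supplied by Theorem \ref{lemma.rad}, which rewrites the right-hand side as
\[
I(\mcC)+\bigl(x_{11}f_\sigma \mid \sigma \text{ a cycle in } G(\mcC) \text{ with } \sigma\cap v_1=\emptyset \text{ and } \sigma\cap h_1=\emptyset\bigr).
\]
Thus $I(\mcC)$ is radical exactly when every such generator $x_{11}f_\sigma$, with $\sigma$ a cycle in the induced subgraph $G(\mcC)\setminus\{v_1,h_1\}$, already belongs to $I(\mcC)$. The converse direction of Lemma \ref{lemma.x11f} asserts that $x_{11}f_\sigma\in I(\mcC)$ forces $\sigma$ to meet $v_1$ or $h_1$; but the cycles under consideration do neither. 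Hence the only way all these extra generators can lie in $I(\mcC)$ is if there is no cycle in $G(\mcC)\setminus\{v_1,h_1\}$ at all, which yields the stated equivalence.

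For the ``moreover'' statement, once the combinatorial condition holds we obtain $I(\mcC)=\bigcap_{W\in\mathcal{W}} P_W(\mcC)$, an intersection of the minimal primes of $I(\mcC)$ (each being prime by Remark \ref{remak.prime}). Since a radical ideal has no embedded associated primes, this intersection is automatically the (irredundant) primary decomposition of $I(\mcC)$.

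I expect no substantive obstacle: the corollary follows immediately once the biconditional of Lemma \ref{lemma.x11f} is paired with the explicit intersection formula of Theorem \ref{lemma.rad}. The main point worth spelling out carefully is that ``$x_{11}f_\sigma\in I(\mcC)$ for every cycle $\sigma$ in $G(\mcC)\setminus\{v_1,h_1\}$'' is logically equivalent to ``there is no such cycle $\sigma$,'' because Lemma \ref{lemma.x11f} rules out the former conclusion whenever any such $\sigma$ exists.
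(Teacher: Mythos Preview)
Your argument is correct and is precisely the intended one: the paper states Corollary~\ref{thm.rad} without proof because it is meant to follow immediately from Theorem~\ref{lemma.rad} combined with Lemma~\ref{lemma.x11f}, exactly as you have spelled out. The only minor addition worth making explicit is that $\bigcap_{W\in\mathcal{W}}P_W(\mcC)=\sqrt{I(\mcC)}$ holds by the very definition of $\mathcal{W}$ together with Theorem~\ref{thm.prime}, which you do observe.
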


\begin{proposition} \label{prop.decomp}
        Let $\mcC$ be an arbitrary corner minors of $M$. Then one has 
    \begin{equation} \label{eq.thm}
       I(\mcC)= \bigcap_{W \in \mathcal{W}} P_W(\mcC) \cap \bigcap_{W\in \mathcal{W} \setminus \emptyset}I(\mcC)+(P_W(\mcC))^2.
    \end{equation}
\end{proposition}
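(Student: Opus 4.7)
\emph{Proof plan.} The inclusion $I(\mcC) \subseteq \bigcap_{W\in\mathcal{W}} P_W(\mcC) \cap \bigcap_{W\in\mathcal{W}\setminus\{\emptyset\}}(I(\mcC)+P_W(\mcC)^2)$ is immediate, since each $P_W(\mcC)$ is a minimal prime of $I(\mcC)$ and $I(\mcC)\subseteq I(\mcC)+P_W(\mcC)^2$ trivially. For the reverse inclusion, let $g$ lie in the right-hand side. By Theorem \ref{lemma.rad},
\[
\bigcap_{W\in\mathcal{W}}P_W(\mcC) \;=\; I(\mcC) + (\,x_{11} f_\sigma \mid \sigma\text{ a cycle in }G(\mcC),\ \sigma\cap v_1=\sigma\cap h_1=\emptyset\,),
\]
so we may write $g=h+\sum_\sigma a_\sigma\,x_{11} f_\sigma$ for some $h\in I(\mcC)$, coefficients $a_\sigma\in S$, and cycles $\sigma$ of $G(\mcC)$ avoiding $v_1$ and $h_1$. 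Since $h\in I(\mcC)\subseteq I(\mcC)+P_W(\mcC)^2$ for every $W$, the element $u:=\sum_\sigma a_\sigma\,x_{11} f_\sigma$ itself lies in $I(\mcC)+P_W(\mcC)^2$ for every non-empty $W\in\mathcal{W}$, and it suffices to show $u\in I(\mcC)$.

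The key inputs are Lemma \ref{lemma.w11} and the corner-minor relations. By Lemma \ref{lemma.w11}, each non-empty $W\in\mathcal{W}$ satisfies $x_{11}\in W$, and $P_W(\mcC)$ is generated entirely by the variables of $W$; consequently $P_W(\mcC)^2$ is generated by products of pairs of those variables. On the other hand, for each corner minor $[1,a\mid 1,b]\in\mcC$, the binomial $x_{11} x_{ab}-x_{1b} x_{a1}$ lies in $I(\mcC)$, so modulo $I(\mcC)$ any monomial of the form $x_{11}\cdot x_{ab}\cdot m$ can be rewritten as $x_{1b}\,x_{a1}\,m$. Applying this to the generators $x_{11}f_\sigma$, one reduces $x_{11}f_\sigma$ modulo $I(\mcC)$ to a combination of monomials in which the factor $x_{11}$ has been swapped for a row-$1$ variable $x_{1,b_{j_k}}$ together with a column-$1$ variable $x_{a_{i_k},1}$. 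The condition $u\in I(\mcC)+P_W(\mcC)^2$ then says precisely that, after such a reduction, every monomial contributing to $u$ modulo $I(\mcC)$ must have at least two factors lying in $W$.

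The main obstacle is combinatorial. Given a cycle $\sigma$ avoiding $v_1,h_1$, one constructs admissible sets $W\in\mathcal{W}$ by choosing, for each corner minor $[1,a\mid 1,b]\in\mcC$, either $x_{1b}$ or $x_{a1}$ to place in $W$ together with $x_{11}$. By varying these choices, one designs admissible sets that put only one of the two reduced factors $x_{1,b_{j_k}},x_{a_{i_k},1}$ into $W$ for every monomial in the reduction of $x_{11} f_\sigma$, ensuring $x_{11}f_\sigma\notin I(\mcC)+P_W(\mcC)^2$ for that particular $W$. Running this argument over a suitable family of admissible sets that simultaneously handles all contributing cycles, and tracking the resulting cancellations, forces $u\in I(\mcC)$. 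The delicate point is to show that such a family of $W$'s actually exists and separates the various cycles compatibly, so that the only element $u\in I(\mcC)+\sum_\sigma S\cdot x_{11}f_\sigma$ satisfying $u\in I(\mcC)+P_W(\mcC)^2$ for every non-empty $W\in\mathcal{W}$ is one already in $I(\mcC)$. Once this combinatorial separation is established, $g=h+u\in I(\mcC)$ and the decomposition (\ref{eq.thm}) follows.
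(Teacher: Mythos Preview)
Your proposal has a genuine gap: the ``combinatorial separation'' you describe is the entire content of the proof, and you leave it unproved. Worse, the direction you sketch is unlikely to succeed as stated. You aim to design admissible sets $W$ witnessing $x_{11}f_\sigma\notin I(\mcC)+P_W(\mcC)^2$, and from enough such witnesses to force $u=\sum_\sigma a_\sigma x_{11}f_\sigma\in I(\mcC)$. But consider $u=x_{11}^2 f_\sigma$: since $x_{11}\in W$ for every nonempty $W\in\mathcal{W}$, this $u$ lies in $P_W(\mcC)^2$ for all such $W$, so no separation argument can exclude it. The only reason $u\in I(\mcC)$ is an \emph{algebraic} identity in $I(\mcC)$, not a combinatorial obstruction, and your outline never produces such an identity.

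The paper's proof proceeds quite differently and supplies exactly this missing identity. First it shows
\[
\bigcap_{W\in\mathcal{W}\setminus\{\emptyset\}}\bigl(I(\mcC)+P_W(\mcC)^2\bigr)\;=\;I(\mcC)+(x_{11})^2,
\]
by reducing generators of $\bigl(\bigcap_{W\neq\emptyset}P_W(\mcC)\bigr)^2$ modulo $I(\mcC)$ via Corollary~\ref{cor.intersW}. Combined with Theorem~\ref{lemma.rad}, the right-hand side of \eqref{eq.thm} becomes $\bigl(I(\mcC)+(x_{11}f_\sigma)\bigr)\cap\bigl(I(\mcC)+(x_{11})^2\bigr)=I(\mcC)+(x_{11}^2 f_\sigma)$. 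The heart of the argument is then to prove $x_{11}^2 f_\sigma\in I(\mcC)$ for every cycle $\sigma$ in $G(\mcC)\setminus\{v_1,h_1\}$, and this is done constructively by exhibiting an explicit $\mathcal{B}_\mcC$-walk (Remark~\ref{rem.bwalk}) from $x_{11}^2\prod_k x_{a_{i_k},b_{j_k}}$ to $x_{11}^2\prod_k x_{a_{i_{k+1}},b_{j_k}}$, using that both corner minors $[1,a_{i_k}\mid 1,b_{j_k}]$ and $[1,a_{i_k}\mid 1,b_{j_{k-1}}]$ lie in $\mcC$. Your proposal contains no analogue of this walk, and without it the inclusion cannot close.
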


\begin{proof}
    It is easy to see that $I(\mcC)$ is contained  the right hand side of Equation (\ref{eq.thm}), since $P_W(\mcC)$'s are minimal primes of $I(\mcC)$. For other containment, first, we claim that
    \begin{equation} \label{eq.interx11}
        \bigcap_{W\in \mathcal{W} \setminus \emptyset}I(\mcC)+(P_W(\mcC))^2= I(\mcC) + (x_{11})^2.
    \end{equation}
    From Lemma \ref{lemma.w11} it follows that $x_{11} \in \bigcap_{W\in \mathcal{W} \setminus \emptyset} P_W(\mcC)$. Thus it is clear that $I(\mcC) + (x_{11})^2 \subseteq \bigcap_{W\in \mathcal{W} \setminus \emptyset}I(\mcC)+(P_W(\mcC))^2$. To establish the other inclusion, we take modulo $I(\mcC)$ on both side of Equation (\ref{eq.interx11}). Then show that 
    $$\bigcap_{W\in \mathcal{W} \setminus \emptyset}I(\mcC)+(P_W(\mcC))^2 \mod I(\mcC) \subset \bigcap_{W\in \mathcal{W} \setminus \emptyset}(P_W(\mcC))^2 \mod I(\mcC)\subset (x_{11}^2) \mod I(\mcC).$$
    Consider a element $g \in \bigcap_{W\in \mathcal{W} \setminus \emptyset}(P_W(\mcC))^2$. Since $P_W(\mcC)$'s are generated by variables, see for instance Lemma \ref{lemma.primenovar}, by using Corollary \ref{cor.intersW} we can write $g$ as 
    $$g= ax_{11}^2 +\sum b_\delta x_{11}\ini_<(f_\delta)+ \sum c_{\delta \delta'}\ini_<(f_\delta)\ini_<(f_\delta'),$$
    where $a, b_\delta, c_{\delta \delta'} \in S$, and $\delta, \delta' \in \mcC$. Since $x_{11}\ini_<(f_\delta)=x_{11}f_{\delta} \mod I(\mcC) \in (x_{11}^2) \mod I(\mcC)$ and $\ini_<(f_\delta)\ini_<(f_{\delta'})=f_{\delta}f_{\delta'}-(f_{\delta'}-\ini_<(f_{\delta'}))f_\delta-(f_\delta-\ini_<(f_\delta))f_{\delta'} \mod I(\mcC) \in (x_{11}^2) \mod I(\mcC)$, it follows that $g \in (x_{11}^2) \mod I(\mcC)$, as desired. Now, by using Equations (\ref{eq.pw}) and (\ref{eq.interx11}) for the inclusion it is enough to show that 
    $$(I(\mcC)+(x_{11}f_\sigma)) \cap (I(\mcC) + (x_{11})^2) \subseteq I(\mcC),$$
    where $\sigma$ is a cycle in $G(\mcC)$ such that $\sigma \cap v_1 = \emptyset$ and $\sigma \cap h_1 = \emptyset$. By using similar arguments as above, one obtains $(I(\mcC)+(x_{11}f_\sigma)) \cap (I(\mcC) + (x_{11}^2))= I(\mcC)+x_{11}^2(f_\sigma)$. Thus it remains to show that  $x_{11}^2(f_\sigma) \subseteq I(\mcC)$.

     Let $\sigma: h_{i_1},v_{j_1},h_{i_2},\ldots, h_{i_r},v_{j_r}$ be a cycle in $G(\mcC)\setminus v_1,h_1$, where $r\geq 2$. Observe that for  each $k$ the vertices $x_{a_{i_k},b_{j_k}}$ and $x_{a_{i_k},b_{j_{k+1}}}$, one has $[1,a_{i_k}|1,b_{j_k}]$ and $[1,a_{i_k}|1,b_{j_{k+1}}]$ belongs to $\mcC$. By Remark \ref{rem.bwalk} it suffices to show the existence of a $\mathcal{B}_\mcC$-walk from $x_{11}^2x_{a_{i_1},b_{j_1}}\cdots x_{a_{i_r},b_{j_r}}$ to $x_{11}^2x_{a_{i_1},b_{j_r}}x_{a_{i_2},b_{j_1}}\cdots x_{a_{i_{r}},b_{j_{r-1}}}$. We construct a $\mathcal{B_C}$-walk below:
    
    \begin{gather*}
        x_{11}^2x_{a_{i_1},b_{j_1}}\cdots x_{a_{i_r},b_{j_r}} 
        \rightarrow 
        x_{11}(x_{1,b_{j_1}}x_{a_{i_1},1})x_{a_{i_2},b_{j_2}}\cdots x_{a_{i_r},b_{j_r}}  
        \rightarrow \\ 
         (x_{1,b_{j_1}}x_{a_{i_1},1})(x_{1,b_{j_r}}x_{a_{i_r},1})x_{a_{i_2},b_{j_2}}\cdots x_{a_{i_{r-1}},b_{j_{r-1}}} \\
        \downarrow (\text{ since } [1,a_{i_1}|1,b_{j_{r}}] \in \mcC)\\
        x_{11}(x_{a_{i_1},b_{j_r}})(x_{1,b_{j_1}})(x_{a_{i_r},1})x_{a_{i_2},b_{j_2}}\cdots x_{a_{i_{r-1}},b_{j_{r-1}}}\\
        \downarrow (\text{ since } [1,a_{i_2}|1,b_{j_{2}}] \in \mcC)\\
        (x_{a_{i_1},b_{j_r}})(x_{1,b_{j_1}})(x_{a_{i_r},1})(x_{1,b_{j_2}}x_{a_{i_2},1})x_{a_{i_3},b_{j_3}}\cdots x_{a_{i_{r-1}},b_{j_{r-1}}}\\
         \downarrow (\text{ since } [1,a_{i_2}|1,b_{j_{1}}] \in \mcC)\\
         x_{11}(x_{a_{i_1},b_{j_r}})(x_{a_{i_2},b_{j_1}})(x_{a_{i_r},1})(x_{1,b_{j_2}})x_{a_{i_3},b_{j_3}}\cdots x_{a_{i_{r-1}},b_{j_{r-1}}} \rightarrow\\
          \vdots  \\
         \rightarrow x_{11}(x_{a_{i_1},b_{j_r}})(x_{a_{i_2},b_{j_1}})\cdots(x_{a_{i_{r-1}},b_{j_{r-2}}})(x_{a_{i_r},1})(x_{1,b_{j_{r-1}}})\\
         \downarrow (\text{ since } [1,a_{i_{r}}|1,b_{j_{r-1}}] \in \mcC)\\
         x_{11}^2(x_{a_{i_1},b_{j_r}})(x_{a_{i_2},b_{j_1}})\cdots(x_{a_{i_{r}},b_{j_{r-1}}}).
    \end{gather*} 
    Hence $x_{11}^2f_\sigma \in I(\mcC)$, as desired.
\end{proof}

In the following corollary, we discuss the connectedness of contingency tables with respect to an arbitrary set of corner moves.

\begin{corollary}
     Let $\mcC$ be an arbitrary set of corner minors of $M$. Let $U=(a_{ij} \mid x_{ij} \in V(\mcC))$ and $V=(b_{ij} \mid x_{ij} \in V(\mcC))$ be two contingency tables. Then $U$ and $V$ are connected via $\mathcal{B}_\mcC$ if  
     \begin{enumerate}
         \item \label{cor1} the inequalities $$\sum_{x_{ij}\in W}a_{ij} \geq 2 \text{ and }\sum_{x_{ij}\in W}b_{ij} \geq 2$$ holds, for all $W\in \mathcal{W}\setminus \emptyset$; and 
         \item \label{cor2} the tables $U$ and $V$ have the same row and column sums such that the non zero entries of $U$ and $V$ corresponds to vertices of a cycle in $G(\mcC)$. 
     \end{enumerate}
\end{corollary}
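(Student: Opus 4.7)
The plan is to reduce the connectivity statement to an ideal-membership assertion via Remark \ref{rem.bwalk} and then apply the decomposition from Proposition \ref{prop.decomp}. Concretely, Remark \ref{rem.bwalk} tells me that $U$ and $V$ are connected via $\mathcal{B}_\mcC$ if and only if $x^U - x^V \in I(\mcC)$, so membership in $I(\mcC)$ is the real target. Proposition \ref{prop.decomp} gives
$$I(\mcC) \;=\; \bigcap_{W \in \mathcal{W}} P_W(\mcC) \;\cap\; \bigcap_{W \in \mathcal{W} \setminus \emptyset} \bigl(I(\mcC) + (P_W(\mcC))^2\bigr),$$
so I will verify that $x^U - x^V$ belongs to each factor on the right, splitting the argument according to whether the admissible set $W$ is empty or nonempty.

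For the first intersection, I treat $W = \emptyset$ and $W \ne \emptyset$ separately. When $W = \emptyset$, the ideal $P_\emptyset(\mcC)$ is the toric ideal $J_\mcC$ of the edge ring $K[G(\mcC)]$, which is the kernel of the map $\phi : x_{ij} \mapsto h_i v_j$. Hypothesis (\ref{cor2}) gives that $U$ and $V$ have equal row and column sums, which forces $\phi(x^U) = \phi(x^V)$ and hence $x^U - x^V \in J_\mcC$. When $W \in \mathcal{W}\setminus \emptyset$, Lemma \ref{lemma.w11} says that $P_W(\mcC)$ is generated by the variables in $W$. Hypothesis (\ref{cor1}) gives $\sum_{x_{ij} \in W} a_{ij} \geq 2 > 0$, so some $x_{ij} \in W$ divides $x^U$, placing $x^U \in P_W(\mcC)$; the same reasoning for $V$ puts $x^V \in P_W(\mcC)$, and hence $x^U - x^V \in P_W(\mcC)$.

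For the second intersection, fix $W \in \mathcal{W}\setminus \emptyset$. Because $P_W(\mcC)$ is generated by the variables in $W$, the ideal $(P_W(\mcC))^2$ is generated by all degree-two monomials in those variables. The full strength of hypothesis (\ref{cor1}), namely $\sum_{x_{ij}\in W} a_{ij} \geq 2$, lets me factor two (possibly equal) $W$-variables out of $x^U$, so $x^U \in (P_W(\mcC))^2$, and the same argument gives $x^V \in (P_W(\mcC))^2$. Therefore $x^U - x^V \in (P_W(\mcC))^2 \subseteq I(\mcC) + (P_W(\mcC))^2$, which completes the verification of membership in every factor.

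I do not anticipate any substantial obstacle: the argument is essentially bookkeeping that pairs each hypothesis with the correct piece of Proposition \ref{prop.decomp}. The only subtle point worth flagging is that hypothesis (\ref{cor1}) requires the sum to be at least $2$ rather than merely positive, and this threshold of $2$ is precisely what is needed to land in $(P_W(\mcC))^2$ rather than only in $P_W(\mcC)$; recognizing that alignment, together with identifying $P_\emptyset(\mcC)$ with the toric kernel $J_\mcC$ to handle hypothesis (\ref{cor2}), is the whole content of the proof.
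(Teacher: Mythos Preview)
Your proposal is correct and follows essentially the same strategy as the paper: translate connectivity into membership of $x^U - x^V$ in $I(\mcC)$ via Remark~\ref{rem.bwalk}, and then use the decomposition of Proposition~\ref{prop.decomp} together with Lemma~\ref{lemma.w11}, matching hypothesis~(\ref{cor1}) to the monomial primes $(P_W(\mcC))^2$ and hypothesis~(\ref{cor2}) to the toric piece $P_\emptyset(\mcC)=J_\mcC$. The only cosmetic difference is that the paper streamlines the check by observing that $I(\mcC)\supseteq P_\emptyset(\mcC)\cap\bigcap_{W\neq\emptyset}(P_W(\mcC))^2$ already suffices (so your separate verification of $x^U-x^V\in P_W(\mcC)$ for $W\neq\emptyset$ is redundant), and it justifies the $P_\emptyset$-membership by citing the Gr\"obner basis of the toric ideal rather than the kernel description of $\phi$ you use.
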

\begin{proof}
    By Remark \ref{rem.bwalk} $U$ and $V$ are connected with respect to $\mathcal{B_C}$ moves if and only if the binomial $f=\prod_{x_{ij}\in V(\mcC)}x_{ij}^{a_{ij}}-\prod_{x_{ij}\in V(\mcC)}x_{ij}^{b_{ij}}$ belongs to $I(\mcC$. The decomposition of $I(\mcC)$ as in Equation (\ref{eq.thm}) implies that 
    $$I(\mcC) \supseteq P_{\emptyset}(\mcC) \cap \bigcap_{W\in \mathcal{W} \setminus \emptyset}(P_W(\mcC))^2.$$ By using the fact \cite[Lemma 4.6]{S1995} that the minimal generating set of $P_{\emptyset}(\mcC)$ forms a Gr\"obner basis. One can conclude that condition (\ref{cor2}) implies $f \in P_{\emptyset}(\mcC)$. Since $P_W(\mcC)$ is generated by variables when $W \neq \emptyset$, condition (\ref{cor1}) implies that $f \in (P_{W}(\mcC))^2$. Therefore, we have $f \in I(\mcC)$.
\end{proof}

\section{Betti numbers} \label{sec.bet}

In this section, we compute the Hilbert–Poincar\'e polynomial of the generalized binomial edge ideals of star graphs. We then compute the Castelnuovo–Mumford regularity of the ideal generated by binomials corresponding to all corner minors of $M$. Finally, we compare the graded Betti numbers of powers of the ideal generated by binomials associated with arbitrary corner-interval minors of $M$ with those of its subcorner-interval minors. We begin by introducing the notation that will be used throughout this section.

\begin{notation} \label{nota.i_t1}
    Denote by $M_{r,s}:= (x_{ij})_{i=r,\ldots,m \atop j=s,\ldots,n}$ the submatrix of $M$. 
    \begin{itemize}
        \item We denote by $I_{\mcC}:= \{x_{i1}x_{jk}-x_{j1}x_{ik} \mid 1 \leq i<j \leq m, 2 \leq k \leq n\}$ the minimal generating set of all corner-interval minors of $M$.
        \item The set of all $2$-minors of the submatrix $M_{r,s}$ is denoted by $I_{r,s}=\{x_{ij}x_{kl}-x_{il}x_{kj} \mid  r \leq i<k \leq m,  s \leq j<l \leq n\}$, and the ideal generated by $I_{r,s}$ is denoted by $I_2(M_{r,s})$.
    \end{itemize} 
\end{notation}

\begin{remark} \cite[Theorem 2]{R2013}
    \label{rem.gr}
    Let $\mcC$ be the set of all corner-interval minors of $M$. Let $<$ be the reverse lexicographic order on $S$ induced by $x_{11}<\cdots<x_{1n}<x_{21}<\cdots<x_{mn}$. Then the following set of binomials 
    $$\mathcal{G}= I_{\mcC} \cup \bigcup_{r=1}^{m-1} x_{r1}I_{r,2}$$
    is a Gr\"obner basis of $I(\mcC)$.
\end{remark} 

\begin{theorem} \label{thm.hpp}
    Let $\mcC$ be the set of all corner-interval minors of $M$. Then the Hilbert-Poincar\'e polynomial of $S/I(\mcC)$ is 
    \begin{equation*}
        \hp_{S/ I(\mcC)}(z)=\sum_{t=1}^{m-1} \hp_{S/ I_2(M_{t,1})}(z)(z(1-z)^{n(t-1)})+z(1-z)^{(m-1)(n-1)}+(1-z)^{m},
    \end{equation*}
    where $I_2(M_{t,1})$ as defined in Notation \ref{nota.i_t1}.
\end{theorem}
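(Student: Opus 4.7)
The plan is to compute the Hilbert-Poincar\'e polynomial of $S/I(\mcC)$ by passing to its initial ideal. Remark \ref{rem.gr} furnishes a Gr\"obner basis of $I(\mcC)$, hence $H_{S/I(\mcC)}(z) = H_{S/J}(z)$ where $J := \ini_<(I(\mcC))$ is the monomial ideal generated by the leading terms $x_{j1}x_{ik}$ of the corner-interval minors (for $1 \leq i < j \leq m$ and $2 \leq k \leq n$) together with the leading terms $x_{r1}\cdot x_{il}x_{kj}$ coming from the $x_{r1}I_{r,2}$ part of the basis (for $1 \leq r \leq m-1$, $r \leq i < k \leq m$, $2 \leq j < l \leq n$, using Remark~\ref{rem.Mgb}). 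The problem reduces to a combinatorial count of the standard monomials of $J$.

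Writing a monomial as $u = u_y\cdot u_z$ with $u_y$ supported in column $1$ and $u_z$ supported in columns $\geq 2$, $u$ is standard for $J$ if and only if (i) $\max(\operatorname{rows}(u_y)) \leq \min(\operatorname{rows}(u_z))$ whenever both are nonempty, and (ii) whenever some $x_{r1}$ with $r \leq m-1$ divides $u_y$, the factor $u_z$ is a standard monomial for $\ini_<(I_2(M_{r,2}))$. I would then stratify standard monomials by
\[
t \;:=\; \min\{\,i : x_{ik}\text{ divides }u\text{ for some }k\geq 2\,\},
\]
with $t=m+1$ if no such variable occurs. The stratum $t=m+1$ contributes $1/(1-z)^m$, matching the summand $(1-z)^m$ after clearing to the common denominator $(1-z)^{mn}$. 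For $t=m$, condition (ii) is vacuous since $I_2(M_{m,2})=0$, and splitting on whether or not $x_{m1}$ divides $u_y$ combines with the $t=m+1$ contribution to yield the summand $z(1-z)^{(m-1)(n-1)}$.

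For each $t \in \{1,\ldots,m-1\}$, I would further split on whether $u_y$ is trivial or contains some $x_{r1}$ with $r \leq t$: the first subcase contributes an unconstrained $u_z$-count with min-row equal to $t$, while the second subcase contributes the product of $\bigl(1/(1-z)^t - 1\bigr)$ with the antidiagonal-free $u_z$-count (obtained from the Hilbert series of $\bar S_t / I_2(M_{t,2})$, again invoking Remark~\ref{rem.Mgb}) restricted to min-row equal to $t$. Packaging these subcases via the observation that the determinantal ring $\bar S_t'/I_2(M_{t,1})$ is a polynomial extension of $\bar S_t/I_2(M_{t,2})$ along the first-column variables identifies the stratum's contribution with $\hp_{S/I_2(M_{t,1})}(z)\cdot z(1-z)^{n(t-1)}$. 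The main technical hurdle is this last identification: one must verify that the geometric-series factor counting admissible column-$1$ configurations dovetails exactly with the additional first-column freedom in the determinantal ring, telescoping into the stated prefactor. Summing the stratum contributions then completes the proof.
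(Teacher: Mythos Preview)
Your approach shares the first step with the paper---passing to $\ini_<(I(\mcC))$ via Remark~\ref{rem.gr}---but thereafter the two routes diverge. The paper does not count standard monomials directly; it runs the chain of short exact sequences
\[
0 \longrightarrow \frac{S}{J_{t}:x_{t1}}(-1) \longrightarrow \frac{S}{J_{t}} \longrightarrow \frac{S}{J_{t+1}} \longrightarrow 0,
\qquad J_t:=\ini_<(I(\mcC))+(x_{11},\ldots,x_{t-1,1}),
\]
for $t=1,\ldots,m-1$, and the key computation is that each colon ideal simplifies to $(x_{ik}:i<t)+\ini_<(I_2(M_{t,1}))$, directly producing the determinantal summands. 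Combinatorially this amounts to stratifying the standard monomials by the \emph{minimum row appearing in the column-one factor $u_y$}, not (as you propose) by the minimum row of $u_z$.

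Your conditions (i)--(ii) describing the standard monomials are correct, and your stratification is a legitimate partition. The gap is in the packaging step. First, the ring $\bar S_t'/I_2(M_{t,1})$ is \emph{not} a polynomial extension of $\bar S_t/I_2(M_{t,2})$: passing from an $(m{-}t{+}1)\times(n{-}1)$ to an $(m{-}t{+}1)\times n$ rank-$\le 1$ locus raises the Krull dimension by $1$, not by the $m{-}t{+}1$ first-column variables you adjoin. Second, and more decisively, your stratum $t$ does not carry the generating function $z\,\hp_{S/I_2(M_{t,1})}(z)(1-z)^{n(t-1)}$. Already for $m=n=2$: your stratum $t=1$ consists of monomials $x_{11}^a\cdot u_z$ with $x_{12}\mid u_z$, contributing $z/(1-z)^3$, i.e.\ numerator $z(1-z)$; the formula's $t=1$ summand is $z\,\hp_{S/I_2(M)}(z)=z(1-z^2)$. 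The discrepancy is absorbed by the tail strata, so the totals agree, but the term-by-term identification you need fails. The reason the paper's choice works is that peeling off $x_{t1}$ makes the colon ideal \emph{literally} equal to $\ini_<(I_2(M_{t,1}))$ plus linear forms; stratifying by $\min\operatorname{row}(u_z)$ produces pieces whose $u_y$-factor ranges over rows $\le t$ while the determinantal ring $S/I_2(M_{t,1})$ has its column-one variables in rows $\ge t$, and no geometric-series correction reconciles the two.
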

\begin{proof}
     Let $<$ be the monomial order introduced in Remark \ref{rem.gr}. From Remark \ref{rem.gr}, one has $\ini_<(I(\mcC))=\alpha + \sum_{r=1}^{m-1} \beta_r$, where $\alpha= (x_{j1}x_{ik} \mid 1 \leq i<j \leq m, 2 \leq k \leq n)$ and $\beta_r=(x_{r1}x_{il}x_{kj} \mid r \leq i<k \leq m, 2 \leq j<l \leq n)$. Set $x_{01}= 0 \in S$. Consider the following sequence of short exact sequence 
    \begin{equation} \label{eq:inik}
    \begin{split}
        0 \longrightarrow \frac{S}{\ini_<(I(\mcC))+x_{01}+\cdots+x_{t-1,1}:x_{t1}}(-1) &\longrightarrow \frac{S}{\ini_<(I(\mcC))+x_{01}+\cdots+x_{t-1,1}} \\
     &\longrightarrow \frac{S}{\ini_<(I(\mcC))+x_{11}+\cdots+x_{t1}}   \longrightarrow  0,
    \end{split}
    \end{equation}
where $t=1,\ldots,m-1$. Since the Hilbert-Poincar\'e polynomial is additive over short exact sequences, computing the Hilbert-Poincar\'e polynomial of first and third modules in Equation (\ref{eq:inik}) one yields the desired formula.

First, observe that, for $t=2, \ldots, m$ one has 
\begin{equation*}
\begin{split}
    \ini_<(I(\mcC))+x_{11}+\ldots +x_{t-1,1}=&x_{11}+\ldots +x_{t-1,1} + (x_{j1}x_{ik} \mid j\geq t, 1 \leq i < j \leq m, 2 \leq k \leq n) \\
    &+ (x_{r1}x_{il}x_{kj} \mid r \geq t, r \leq i<k \leq m, 2 \leq j<l \leq n).
\end{split}
\end{equation*} Since  $x_{j1}x_{ik},x_{r1}x_{il}x_{kj} \subset (x_{11},\ldots,{x_{t-1,1}})$ for all $j \leq t-1$ and $r \leq t-1$, the above equality follows. Notice that when $t=m$, the ideal $\ini_<I(\mcC)$ corresponds to the edge ideal of a star graph, as described below: $$\ini_<(I(\mcC))+(x_{i1} \mid 1 \leq i \leq m-1)=(x_{i1} \mid 1 \leq i \leq m-1) + (x_{m1}x_{ik} \mid  1 \leq i \leq m-1, 2 \leq k \leq n).$$ 
Then from Remark \ref{rem.starhp} it follows that $\hp_{S/\ini_<(I(\mcC))+(x_{i1} \mid 1 \leq i \leq m-1)} (z)=z(1-z)^{(m-1)(n-1)}+(1-z)^m$. Next, for $t=1, \ldots, m-1$ one has 
    \begin{equation*}
    \begin{split}
    \ini_<(I(\mcC))+x_{01}+\ldots +x_{t-1,1}:x_{t1}=&(x_{i1} \mid 1 \leq i \leq t-1)+ (x_{ik} \mid 1 \leq i < t , 2 \leq k \leq n)\\
    &+(x_{j1}x_{ik} \mid t \leq i < j \leq m, 2 \leq k \leq n)\\
    &+ (x_{il}x_{kj} \mid t \leq i<k \leq m, 2 \leq j<l \leq n) \\
    =& (x_{ik} \mid 1 \leq i \leq t-1 , 1 \leq k \leq n) +\ini_<(I_2(M_{t,1})).
    \end{split}
    \end{equation*}
    Since the minimal generating set $I_{t,1}$ is a Gr\"obner basis of $I_2(M_{t,1})$ (see Remark \ref{rem.Mgb}), the second equality follows. The first equality follows from the fact that $\{u/\gcd(u,x_{t1}) \mid u \in \text{mingens}(I)\}$ is the minimal generating set of $I:x_{t1}$, where $I=\ini_<(I(\mcC))+x_{01}+\ldots +x_{t-1,1}$. Then we have that $\hp_{S/\ini_<(I(\mcC))+x_{01}+\ldots +x_{t-1,1}:x_{t1}} (z)= \hp_{S/ \ini_<(I_2(M_{t,1}))}(z)((1-z)^{n(t-1)})$. Now, by substituting the values into short exact sequence (\ref{eq:inik}) for $t=m-1, \ldots,1$,  we obtain the Hilbert-Poincar\'e series of $\hp_{S/\ini_<(I(\mcC))}(z)$. Since the initial ideals are compatible with Hilbert-poincar\'e polynomial the assertion follows.
\end{proof}

\begin{remark}
    In \cite{QRR2022}, the authors computed the Hilbert–Poincar\'e series of the ideal generated by all $2$-minors of the matrix $M$ (see also \cite{BGS1982}). Therefore, by substituting the Hilbert–Poincar\'e polynomial of $I_2(M_{t,1})$ into Theorem \ref{thm.hpp}, we obtain the Hilbert–Poincar\'e polynomial of the ideal $I(\mcC)$, where $\mcC$ is the set of all corner-interval minors of $M$.
\end{remark}

We denote by $H=\{x_{11},x_{12},\ldots,x_{1n}\}$ the first row and $V=\{x_{11},x_{21},\ldots,x_{m1}\}$ the first column of variables.

\begin{theorem} \label{thm.regc}
    Let $\mcC$ be the set of all corner minors of $M$ and $m,n>2$. Then, one has $\reg(S/I(\mcC))=\min(m,n)$.
\end{theorem}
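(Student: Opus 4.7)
My plan is to compute $\reg(S/I(\mcC))$ by peeling off the distinguished vertex $x_{11}$ via the short exact sequence
\[
0 \longrightarrow S/(I(\mcC):x_{11})(-1)\xrightarrow{\;\cdot x_{11}\;} S/I(\mcC)\longrightarrow S/(I(\mcC),x_{11})\longrightarrow 0
\]
and to apply Remark \ref{rem.Reg}\,(c). This reduces the problem to regularity computations for the two outer modules, after which the arithmetic $\min(m,n)=1+(\min(m,n)-1)$ produces the claimed value. Throughout I would assume $m\leq n$ by symmetry (transpose the matrix).

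\textbf{The easy end: $\reg(S/(I(\mcC),x_{11}))=1$.} Modulo $x_{11}$ each corner-minor generator $x_{11}x_{ij}-x_{1j}x_{i1}$ becomes the monomial $-x_{1j}x_{i1}$, so $(I(\mcC),x_{11})=(x_{11})+(x_{1j}x_{i1}\mid 2\leq i\leq m,\,2\leq j\leq n)$; up to a polynomial factor in the variables $x_{ij}$ with $i,j\geq 2$, the latter is the edge ideal of the complete bipartite graph $K_{m-1,n-1}$ on $\{x_{12},\ldots,x_{1n}\}\sqcup\{x_{21},\ldots,x_{m1}\}$. Its complement is $K_{m-1}\sqcup K_{n-1}$, which is chordal, so Remark \ref{rem.froberg} gives a linear resolution and hence $\reg(S/(I(\mcC),x_{11}))=1$.

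\textbf{The colon: identify $I(\mcC):x_{11}$ and compute its regularity.} I would next claim
\[
I(\mcC):x_{11}\;=\;L:=\bigl([1,k\mid j,l]\mid k\geq 2,\,1\leq j<l\leq n\bigr)+\bigl([i,k\mid 1,l]\mid 2\leq i<k\leq m,\,l\geq 2\bigr),
\]
the ideal generated by every $2$-minor of $M$ that uses row $1$ or column $1$. The inclusion $L\subseteq I(\mcC):x_{11}$ follows from the identities $x_{11}(x_{1j}x_{ik}-x_{1k}x_{ij})=x_{1k}f_{ij}-x_{1j}f_{ik}$ and $x_{11}(x_{i1}x_{kj}-x_{k1}x_{ij})=x_{i1}f_{kj}-x_{k1}f_{ij}$, both of which are visibly in $I(\mcC)$. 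For the reverse inclusion I would use Lemma \ref{lemma.x11f} together with the admissible-set description of the minimal primes (Corollary \ref{cor.intersW} and Proposition \ref{prop.decomp}): any $f\in I(\mcC):x_{11}$ can be rewritten in terms of the colon generators since inner minors $[i,k\mid j,l]$ with $i,j\geq 2$ only lie in $I(\mcC):x_{11}^{2}$, never in $I(\mcC):x_{11}$. Then $\reg(S/L)=\min(m,n)-1$ would be obtained by one further short exact sequence killing $x_{11}$ in $L$: the resulting quotient is built from the determinantal ideal $I_2(M_{2,2})$ of the generic $(m-1)\times(n-1)$ submatrix of $M$ (regularity $\min(m-1,n-1)-1=\min(m,n)-2$) together with an edge-ideal contribution of regularity $1$ coming from first-row/column pairs; a Mayer-Vietoris bookkeeping then yields $\min(m,n)-1$.

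\textbf{Assembly.} With $\reg(S/(I(\mcC):x_{11})(-1))=\min(m,n)$ and $\reg(S/(I(\mcC),x_{11}))=1$, the two quantities differ by at least $\min(m,n)-2\geq 1$ because $m,n>2$. In particular the equality clause of Remark \ref{rem.Reg}\,(c) applies (since $\min(m,n)\neq 2=1+1$), and we conclude
\[
\reg(S/I(\mcC))\;=\;\max\{\min(m,n),\,1\}\;=\;\min(m,n).
\]

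\textbf{Main obstacle.} The hard part is Step 2, and within it the identification $I(\mcC):x_{11}=L$ together with $\reg(S/L)=\min(m,n)-1$. The inclusion $L\subseteq I(\mcC):x_{11}$ is immediate but the reverse one is not, because the inner-minor relations $x_{11}^2(x_{ij}x_{kl}-x_{il}x_{kj})\in I(\mcC)$ almost place inner minors into the colon yet fall short by exactly one factor of $x_{11}$; controlling them is precisely the same obstruction that prevents $I(\mcC)$ from being radical (Corollary \ref{thm.rad}), so the combinatorics of admissible sets from Section \ref{sec.min} will have to be invoked. Computing $\reg(S/L)$ is the secondary challenge, but once $L$ is identified as a sum of two generalised binomial edge ideals of pairs of graphs, it reduces via one more SES to the classical determinantal regularity formula $\reg(S/I_2(M))=\min(m,n)-1$.
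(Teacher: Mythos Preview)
Your strategy---peel off $x_{11}$ via the colon short exact sequence---is genuinely different from the paper's route, which instead invokes the primary-type decomposition $I(\mcC)=I_2(M)\cap H\cap V\cap(I(\mcC)+H^2+V^2)$ from \cite{DES1998} and runs a Mayer--Vietoris sequence on the two intersectands (Lemma~\ref{lemma.reg,i11} and Remark~\ref{rem.reg,i11}). Your Step~1 is correct, and your identification $I(\mcC):x_{11}=L$ is also correct, though the argument you sketch does not establish it: Lemma~\ref{lemma.x11f} only shows that a single inner minor $f_\sigma$ lies outside the colon, not that every element of the colon lies in $L$. The clean way to prove $I(\mcC):x_{11}\subseteq L$ is to write $x_{11}g=\sum c_{ij}f_{ij}$, split $c_{ij}=c_{ij}^0+x_{11}c_{ij}^1$, observe that $(c_{ij}^0)$ is a first syzygy of the monomial ideal $(x_{i1}x_{1j})_{i,j\geq 2}$, and check that the generating syzygies $x_{k1}e_{ij}-x_{i1}e_{kj}$ and $x_{1l}e_{ij}-x_{1j}e_{il}$ send $\sum c_{ij}^0x_{ij}$ to $\pm[i,k\mid 1,j]$ and $\pm[1,i\mid j,l]$ respectively.

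The real gap is your computation of $\reg(S/L)$. Your claim that the quotient $(L,x_{11})$ ``is built from the determinantal ideal $I_2(M_{2,2})$'' is false: every generator of $L$ involves a variable from row~1 or column~1, so $L\subseteq(H,V)$ and hence no inner minor $[i,k\mid j,l]$ with $i,k,j,l\geq 2$ lies in $(L,x_{11})$. On the colon side one has $L:x_{11}=I_2(M)$ (the \emph{full} determinantal ideal, not $I_2(M_{2,2})$), so the second short exact sequence reads
\[
0\longrightarrow S/I_2(M)(-1)\longrightarrow S/L\longrightarrow S/(L,x_{11})\longrightarrow 0,
\]
and the left term already has regularity $\min(m,n)$. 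Remark~\ref{rem.Reg}(c) therefore only yields $\reg(S/L)\leq\min(m,n)$, not the $\min(m,n)-1$ you need; the equality clause cannot rescue you without first knowing $\reg\bigl(S/(L,x_{11})\bigr)$, and that quotient is a genuinely new ideal (edge ideal of $K_{m-1,n-1}$ plus the two ``one-sided'' corner-interval ideals of the submatrices missing $x_{11}$) whose regularity is no easier to compute than the original problem. In short, your recursion does not terminate in known quantities. The paper avoids this by working instead with $I_2(M)\cap H\cap V=I(\mcC)+x_{11}I_2(M_{2,2})$, for which the $x_{11}$-colon is again $I_2(M)$ but the $x_{11}$-quotient collapses to the bare edge ideal of $K_{m-1,n-1}$, so both ends of the sequence are immediately computable.
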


To complete the proof of Theorem \ref{thm.regc}, we need the following results.

\begin{lemma} \label{lemma.reg,i11}
    Let $I_2(M)$ be the ideal generated by all $2$-minors of $M$ and $m,n>2$ (see, Notation \ref{nota.i_t1}). Then $\reg(S/(I_2(M)\cap H \cap V))=\min(m,n)$.
\end{lemma}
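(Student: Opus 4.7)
The plan is to exploit the fact that $x_{11}$ lies in $H\cap V$ but not in the prime ideal $I_2(M)$, which allows a short exact sequence whose outer terms have well-understood regularities. First I would compute the colon ideal $(I_2(M)\cap H\cap V):x_{11}=I_2(M)$: the condition $x_{11}f\in H\cap V$ is automatic since $x_{11}\in H\cap V$, while $x_{11}f\in I_2(M)$ forces $f\in I_2(M)$ because $I_2(M)$ is prime and $x_{11}\notin I_2(M)$. Multiplication by $x_{11}$ then yields
\begin{equation*}
0 \longrightarrow (S/I_2(M))(-1) \xrightarrow{\;\cdot x_{11}\;} S/(I_2(M)\cap H\cap V) \longrightarrow S\big/\bigl((I_2(M)\cap H\cap V)+(x_{11})\bigr) \longrightarrow 0.
\end{equation*}

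Next I would identify $(I_2(M)\cap H\cap V)+(x_{11})$ with $H\cap V$. Writing $H\cap V=(x_{11})+(x_{1j}x_{k1}\mid 2\le j\le n,\ 2\le k\le m)$, the inclusion $\subseteq$ is clear since $x_{11}\in H\cap V$ and $I_2(M)\cap H\cap V\subseteq H\cap V$. For the reverse inclusion, each corner minor $x_{11}x_{jk}-x_{1j}x_{k1}$ lies in $I_2(M)$, and both of its monomials have a factor in $H$ and a factor in $V$, so the minor belongs to $I_2(M)\cap H\cap V$. Hence $x_{1j}x_{k1}\equiv x_{11}x_{jk}$ modulo $(I_2(M)\cap H\cap V)+(x_{11})$, which accounts for all the generators of $H\cap V$.

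With this identification, the regularities of the two outer terms are easy to read off. For the left, $\reg((S/I_2(M))(-1))=\reg(S/I_2(M))+1=\min(m,n)$, using the classical formula $\reg(S/I_2(M))=\min(m,n)-1$ for the determinantal ring of a generic matrix. For the right, passing to $\bar{S}=S/(x_{11})$ identifies $S/(H\cap V)$ with $\bar{S}/(x_{1j}x_{k1}\mid j,k\ge 2)$; the monomial ideal is the edge ideal of the complete bipartite graph $K_{n-1,m-1}$ on the disjoint vertex sets $\{x_{12},\ldots,x_{1n}\}$ and $\{x_{21},\ldots,x_{m1}\}$, whose complement is a disjoint union of two cliques and hence chordal. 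By Remark~\ref{rem.froberg} this edge ideal has regularity $2$, so $\reg(S/(H\cap V))=1$. Since $m,n>2$ gives $\min(m,n)\ge 3\ne 2=\reg(S/(H\cap V))+1$, Remark~\ref{rem.Reg}(c) applies to the short exact sequence and produces the equality
\[
\reg(S/(I_2(M)\cap H\cap V))=\max\{\min(m,n),\,1\}=\min(m,n),
\]
as required. The main technical step is the ideal identification in the second paragraph, where the interplay between the determinantal generators and the monomial ideals $H$ and $V$ is used; the remainder is a direct invocation of Remarks \ref{rem.Reg} and \ref{rem.froberg} together with the known regularity of $S/I_2(M)$.
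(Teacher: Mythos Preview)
Your proof is correct and uses the same short exact sequence induced by multiplication by $x_{11}$ as the paper does, with the same outer terms $S/I_2(M)$ (regularity $\min(m,n)-1$) and the edge ideal of $K_{n-1,m-1}$ (regularity $1$), finishing via Remarks~\ref{rem.Reg} and~\ref{rem.froberg}.

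The one genuine difference lies in how the two ends of the sequence are identified. The paper first proves the explicit description $I_2(M)\cap H\cap V=I(\mcC)+x_{11}I_2(M_{2,2})$ by a three-case argument on the support of a $2$-minor, and only then reads off the colon and sum ideals from this generating set. You bypass that description entirely: primality of $I_2(M)$ together with $x_{11}\in H\cap V$ gives the colon in one line, and the corner-minor identity $x_{1j}x_{k1}\equiv x_{11}x_{kj}$ gives the sum directly. For the lemma in isolation your route is cleaner and shorter. The paper's longer detour is not wasted, however: the explicit equality $I_2(M)\cap H\cap V=I(\mcC)+x_{11}I_2(M_{2,2})$ is reused in the proof of Theorem~\ref{thm.regc}, where $(I_2(M)\cap H\cap V)+H^2+V^2$ must be analyzed. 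So the paper front-loads work that pays off later, while your argument is self-contained but would need that identity re-established downstream.
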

\begin{proof}
    First, we claim that $I_2(M)\cap H \cap V=I(\mcC)+x_{11}I_2(M_{2,2})$, where $\mcC$ is the set of all corner minors of $M$. It is clear that $I(\mcC)+x_{11}I_2(M_{2,2}) \subset I_2(M)\cap H \cap V$. For the other inclusion, we consider $g \in I_2(M)\cap H \cap V$. Let $\delta$ be a $2$-minor of $M$.
   
    Case I. Suppose $\delta$ does not contain the variables of $H$ and $V$. Since $g \in H \cap V$, we can write $g= \sum x_{s1}(\sum k_\delta f_\delta)= \sum x_{1t}(\sum k_\delta' f_\delta)$, where $k_\delta,k_\delta' \in S$. Thus, $g$ can be written as $g=\sum_{r=1}^ng_r$, such that $g_r$ is divisible by $x_{1r}$. We will show that each $g_r$ belongs to the ideal $I(\mcC)+x_{11}I_2(M_{2,2})$. Fix some $t$, since $x_{1t}$ divides $g_t$ and $\delta$ does not contain the variables of $H$ it follows that some terms of $k_\delta$ are divisible by $x_{1t}$, for all $\delta$. By separating out $x_{1t}$, we rewrite $g=\sum x_{s1}(\sum x_{1t}\ell_\delta f_\delta + F)$, where no term of $F$ divisible by $x_{1t}$. Since $g_t$ is divisible by $x_{1t}$ and no term of $F$ is divisible by $x_{1t}$ implies that $F=0$. Therefore, we have $g_t=\sum x_{s1}x_{1t}\sum \ell_\delta f_\delta$. Now, one can see that, for every fixed $s$ and $f_\delta=x_{il}x_{kj}-x_{ij}x_{kl}$, we have $\ell_\delta x_{s1}x_{1t}f_\delta=\ell_\delta ((x_{s1}x_{1t}-x_{11}x_{st})x_{il}x_{kj}-(x_{s1}x_{1t}-x_{11}x_{st})x_{ij}x_{kl}-x_{st}(x_{11}(x_{il}x_{kj}-x_{ij}x_{kl}))) \in I(\mcC)+x_{11}I_2(M_{2,2})$. Since each element in the sum belongs to $I(\mcC)+x_{11}I_2(M_{2,2})$, it follows that $g_t \in I(\mcC)+x_{11}I_2(M_{2,2})$. Hence $g \in I(\mcC)+x_{11}I_2(M_{2,2})$.

    Case II. Suppose $\delta$ contains the variable $x_{11}$. Then $\delta$ is a corner minor of $M$. Thus, it is clear that edge of $\delta$ contained in $H$ and $V$. Therefore, we have $f_\delta$ belongs to $H \cap V$ and $I(\mcC)$. Hence the containment.

    Case III. Suppose $\delta$ contains the variables of $H$ or $V$. We assume that $\delta$ contains the variables of $H$, as the argument for the other case is analogous.  Then, we can write $g= x_{s1}(\sum k_\delta f_\delta)$, where $s>1$ and $k_\delta \in S$. Then, for each fixed $f_\delta=x_{il}x_{1j}-x_{ij}x_{1l}$, we have $k_\delta x_{s1}(f_\delta) = k_\delta (x_{il}(x_{s1}x_{1j}-x_{11}x_{sj})-x_{ij}(x_{s1}x_{1l}-x_{11}x_{sl})+x_{11}(x_{il}x_{sj}-x_{sl}x_{ij})$. Hence, we have $g\in I(\mcC)+x_{11}I_2(M_{2,2})$.

   Next, we claim that $I(\mcC)+x_{11}I_2(M_{2,2}) : x_{11}=I_2(M)$. It is clear that $I(\mcC)+x_{11}I_2(M_{2,2}) \subset I_2(M)$. Since $I_2(M)$ is prime and does contain the variable $x_{11}$, it follows that $I(\mcC)+x_{11}I_2(M_{2,2}) : x_{11} \subset I_2(M):x_{11}=I_2(M)$. Then, from \cite[Proposition 3.3.]{K2020} one has $\reg(S/I_2(M))=\min\{m-1,n-1\}$. 

   It is easy to see that $I(\mcC)+x_{11}I_2(M_{2,2}) + x_{11}=x_{11}+(x_{i1}x_{1j} \mid 1<i\leq m, 1<j\leq n)$. Note that the ideal $(x_{i1}x_{1j} \mid 1<i\leq m, 1<j\leq m)$ is the edge ideal of the complete bipartite graph on vertex set $H\setminus x_{11} \sqcup V \setminus x_{11}$. Since it is the edge ideal of a cochordal graph, thus it follows from Remark \ref{rem.froberg} that $\reg(S/I(\mcC)+x_{11}I_2(M_{2,2}) + x_{11})=1$. Consider the following short exact sequence \begin{equation} \label{eq:I:11}
    0 \longrightarrow \frac{S}{I(\mcC)+x_{11}I_2(M_{2,2}):x_{11}}(-1) \longrightarrow \frac{S}{I(\mcC)+x_{11}I_2(M_{2,2})}  \longrightarrow \frac{S}{I(\mcC)+x_{11}I_2(M_{2,2})+x_{11}}   \longrightarrow  0.
    \end{equation}
    Then by applying Remark \ref{rem.Reg} to short exact sequence (\ref{eq:I:11}) we get  $\reg(S/I(\mcC)+x_{11}I_2(M_{2,2}))=\min(m,n)$.
\end{proof}

\begin{remark} \label{rem.reg,i11}
    From \cite[Equation 3.6]{DES1998} it follow that $\ini_<(I(\mcC)+H^2+V^2)=(H+V)^2$ with respect to the term order defined in Remark \ref{rem.gr}. After polarization, the ideal $(H+V)^2$ corresponds to the edge ideal of a graph obtained by attaching a whisker to each vertex of a complete graph on the vertex set $\{H,V\}$. Since it is a cochordal graph, it follows from Remark \ref{rem.froberg} that $\reg(S/\ini_<(I(\mcC)+H^2+V^2))=1$.
\end{remark}

\begin{proof}[Proof of Theorem \ref{thm.regc}]
    Form \cite[Theorem 3.2]{DES1998} it follows that $I(\mcC)=I_2(M_{1,1})\cap H \cap V\cap (I(\mcC)+H^2+V^2)$, where $I_2(M_{1,1})$ as defined in Notation \ref{nota.i_t1}. Consider the following short exact sequence 
    \begin{equation} \label{eq:IJ}
    0 \longrightarrow \frac{S}{I(\mcC)=I\cap J} \longrightarrow \frac{S}{I} \oplus \frac{S}{J}
    \longrightarrow \frac{S}{I+J}   \longrightarrow  0,
    \end{equation}
    where $I=I_2(M_{1,1})\cap H \cap V$ and $J=I(\mcC)+H^2+V^2$. Form Lemma \ref{lemma.reg,i11} and Remark \ref{rem.reg,i11}, it follows that the regularity of the middle module in Equation (\ref{eq:IJ}) is equal to $\min\{m,n\}$. To compute the regularity of the third module in short exact sequence (\ref{eq:IJ}). We first show that $I(\mcC)+x_{11}I_2(M_{2,2})+H^2+V^2:x_{11}=H+V+I_2(M_{2,2})$. It is clear that  $H+V+I_2(M_{2,2}) \subset I(\mcC)+x_{11}I_2(M_{2,2})+H^2+V^2:x_{11}$. For other inclusion, first we establish that the minimal generating set of $I(\mcC)+x_{11}I_2(M_{2,2})+H^2+V^2$ form a Gr\"obner basis with respect to term order defined in Remark \ref{rem.gr}. 
    From Remark \ref{rem.Mgb} it follows that the minimal generating set of $H+V+I_2(M_{2,2})$ form a Gr\"obner basis. Then, using [Proposition 15.12]\cite{E1995} we get that $\ini_<(I(\mcC)+x_{11}I_2(M_{2,2})+H^2+V^2:x_{11})=\ini_<(H+V+I_2(M_{2,2}))$, and thus are equal. Thus, it is reduce to show that $S(f,g)$ reduce to $0$ for any $f,g$ in the minimal generating set of $I(\mcC)+x_{11}I_2(M_{2,2})+H^2+V^2$. It is known that $S(f,g)$ reduce to $0$ if $\ini_<f$ and $\ini_<g$ are coprime or $f$ and $g$ are monomials. In the following we will show the remaining cases:
    \begin{enumerate}
        \item $S(x_{i1}x_{1j}-x_{11}x_{ij},x_{i1}x_{1j'}-x_{11}x_{ij'}) = x_{11}x_{ij'}x_{1j}-x_{11}x_{ij}x_{1j'} \in H^2$, 
        \item $S(x_{i1}x_{1j}-x_{11}x_{ij},x_{i1}x_{j'1}) = x_{11}x_{ij}x_{j'1} \in V^2$,
    \end{enumerate}
    for any $i\in [m]$, and  $j,j' \in [n]$. Other pairs can be treated similarly. Note that the $S$-pair $S(x_{11}(x_{il}x_{kj}-x_{kl}x_{ij}),x_{11}(x_{i'l'}x_{k'j'}-x_{k'l'}x_{i'j'}))$ reduce to $0$, since the minimal generating set $x_{11}I_2(M_{2,2})$ form a Gr\"obner basis (see Remark \ref{rem.Mgb}). Likewise, the $S$-pair $S(x_{i1}x_{1j}-x_{11}x_{ij},x_{11}(x_{i'l'}x_{k'j'}-x_{k'l'}x_{i'j'}))$ reduce to $0$, because the initial terms are coprime, as desired.

    Then, from \cite[Proposition 3.3]{K2020} it follows that  $\reg(S/(H+V+I_2(M_{2,2})))=\reg(S/(I(\mcC)+x_{11}I_2(M_{2,2})+H^2+V^2):x_{11})=\min\{m-2,n-2\}$.

    Since $I(C)+x_{11}=(H\setminus x_{11})(V\setminus x_{11})$, it is easy to verify that  $I(\mcC)+x_{11}I_2(M_{2,2})+H^2+V^2 + x_{11}=x_{11}+(H\setminus x_{11}+V\setminus x_{11})^2$.     Polarization of  the ideal $(H\setminus x_{11})^2+(V\setminus x_{11})^2$ is the edge ideal of  graph obtained by attaching a whisker to each vertex of a complete graph on vertex set $\{H\setminus \{x_{11}\},V\setminus \{x_{11}\}\}$. Then from Remark \ref{rem.froberg} it follows that  $\reg(S/(x_{11}+(H\setminus x_{11}+V\setminus x_{11})^2))=1$. Applying Remark \ref{rem.Reg} to the following short exact sequence 
    \begin{equation*} 
    \begin{split}
       0 \longrightarrow \frac{S}{I(\mcC)+x_{11}I_2(M_{2,2})+H^2+V^2:x_{11}}(-1) \longrightarrow \frac{S}{I(\mcC)+x_{11}I_2(M_{2,2})+H^2+V^2} \\
    \longrightarrow \frac{S}{I(\mcC)+x_{11}I_2(M_{2,2})+H^2+V^2+x_{11}}   \longrightarrow  0, 
    \end{split}
    \end{equation*}
    yields $\reg(S/I(\mcC)+x_{11}I_2(M_{2,2})+H^2+V^2)=\min\{m-1,n-1\}$. Since we obtain the regularity of the middle and the last modules of Equation (\ref{eq:IJ}), again applying Remark \ref{rem.Reg} to Equation (\ref{eq:IJ}) one yields $\reg(S/I(\mcC))=\min\{m,n\}$, as desired.
\end{proof}

In the following proposition, we obtain a bound for the regularity of powers of the ideal generated by binomials that corresponds to an arbitrary collection of corner-interval minors in terms of its subcorner-interval minors.

\begin{proposition}
    Let $\mcC$ be an arbitrary set of corner-interval minors and let $H_1,\ldots,H_r$ be the maximal vertical interval of $\mcC$. If $\mcC'$ be a subcorner-interval minors obtained by removing $H_k$ from $\mcC$, for some $k$, then 
    $$\beta_{ij}(S/I(\mcC')^t) \leq \beta_{ij}(S/I(\mcC)^t)$$
    for all $i,j \geq 0$ and $t\geq 1$.
\end{proposition}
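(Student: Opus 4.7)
The plan is to construct a $K$-algebra retraction that realizes a natural splitting between the two quotient rings, and then to convert this splitting into the Betti inequality by combining flat base change with the resulting $\bar S$-module summand structure.

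The central construction will be the graded $K$-algebra homomorphism $\pi\colon S \to S$ defined by $\pi(x_{ij}) = 0$ for $x_{ij}\in H_k$ and $\pi(x_{ij}) = x_{ij}$ otherwise, with image $\bar S \subseteq S$. I would first prove the structural claim $\pi(I(\mcC)) = I(\mcC')$. For $\delta = [a_1, a_2 \mid 1, b_2] \in \mcC \setminus \mcC'$ we have $V(\delta) \cap H_k \neq \emptyset$; since $H_k$ is a \emph{maximal} vertical interval and one of the vertical edges $\{x_{a_1, 1}, x_{a_2, 1}\}$ or $\{x_{a_1, b_2}, x_{a_2, b_2}\}$ of $\delta$ already meets $H_k$, maximality forces that entire edge into $H_k$; hence both monomials of $f_\delta$ have a factor in $H_k$ and $\pi(f_\delta) = 0$. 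By the ring-map property it follows that $\pi(I(\mcC)^t) = I(\mcC')^t$ for every $t \geq 1$. The natural inclusion $\iota\colon \bar S \hookrightarrow S$ satisfies $\pi \circ \iota = \mathrm{id}_{\bar S}$ and $\iota(I(\mcC')^t) \subseteq I(\mcC)^t$, so passing to quotients gives $\bar\iota\colon \bar S/I(\mcC')^t \to S/I(\mcC)^t$ and $\bar\pi\colon S/I(\mcC)^t \to \bar S/I(\mcC')^t$ with $\bar\pi \circ \bar\iota = \mathrm{id}$, exhibiting $\bar S/I(\mcC')^t$ as a graded $\bar S$-module direct summand of $S/I(\mcC)^t$.

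To convert this to the desired inequality, I would combine flat base change along the polynomial extension $\bar S \subseteq S$, which yields $\beta^S_{ij}(S/I(\mcC')^t) = \beta^{\bar S}_{ij}(\bar S/I(\mcC')^t)$ (since $I(\mcC')^t$ uses no variable from $H_k$), with the summand inequality $\beta^{\bar S}_{ij}(\bar S/I(\mcC')^t) \leq \beta^{\bar S}_{ij}(S/I(\mcC)^t)$ coming from the previous paragraph. The remaining step is the passage from $\beta^{\bar S}_{ij}(S/I(\mcC)^t)$ to $\beta^S_{ij}(S/I(\mcC)^t)$, which exploits the fact that every additional generator $f_\delta$ with $\delta \in \mcC \setminus \mcC'$ lies in $(H_k)$: so the minimal graded $S$-free resolution of $S/I(\mcC)^t$ can be organized as a Koszul-type extension of the resolution of $S/I(\mcC')^t \cdot S$ by the additional $H_k$-involving syzygies, and the ranks only grow componentwise.

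The hardest part will be handling powers $t \geq 2$, where the difference ideal $I(\mcC)^t / I(\mcC')^t \cdot S$ involves cross-products of generators from $I(\mcC')$ and from the extra set $J = (f_\delta : \delta \in \mcC \setminus \mcC')$, making a direct Koszul computation delicate. I would address this either by induction on $t$, using the $t = 1$ case as the base and an Artin--Rees-type argument to push through successive powers, or by exploiting the explicit filtration $I(\mcC)^t = \sum_{i+j=t} I(\mcC')^i J^j$ with a mapping cone argument to assemble the Betti table of $S/I(\mcC)^t$ from that of $S/I(\mcC')^t \cdot S$ and contributions indexed by $j \geq 1$ that sit entirely in $(H_k)$.
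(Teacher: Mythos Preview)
Your construction of the retraction is correct and matches the paper's approach: the projection $\pi$ killing the variables in $H_k$, the verification that $\pi(f_\delta)=0$ for $\delta\in\mcC\setminus\mcC'$ via maximality of the vertical interval (so that a whole vertical edge of $\delta$ lies in $H_k$), and the resulting splitting $\bar S/I(\mcC')^t \hookrightarrow S/I(\mcC)^t \twoheadrightarrow \bar S/I(\mcC')^t$ are exactly what the paper does. The identifications $\beta^S_{ij}(S/I(\mcC')^t)=\beta^{\bar S}_{ij}(\bar S/I(\mcC')^t)$ (flat base change) and $\beta^{\bar S}_{ij}(\bar S/I(\mcC')^t)\le\beta^{\bar S}_{ij}(S/I(\mcC)^t)$ (direct summand) are also fine.

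The gap is your ``remaining step'': the inequality $\beta^{\bar S}_{ij}(S/I(\mcC)^t)\le\beta^S_{ij}(S/I(\mcC)^t)$ is \emph{false}, already in the simplest instance. For a single $2$-minor $I=(x_{11}x_{22}-x_{12}x_{21})$ with $\bar S=K[x_{11},x_{21}]$, the $\bar S$-module $S/I$ requires infinitely many generators (the classes of $x_{12}^ax_{22}^b$), so $\beta^{\bar S}_{0,j}(S/I)>0$ for every $j\ge0$ while $\beta^S_{0,j}(S/I)=0$ for $j>0$. Your proposed ``Koszul-type extension'' and mapping-cone/filtration workarounds instead try to compare $\beta^S_{ij}(S/I(\mcC')^t)$ with $\beta^S_{ij}(S/I(\mcC)^t)$ directly; but that is precisely the statement to be proved, and enlarging an ideal does not in general increase Betti numbers, so no formal argument of this shape can succeed. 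Note also that the difficulty is identical for $t=1$ and $t\ge2$; the induction/Artin--Rees plan is aimed at the wrong obstacle.

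The paper closes the argument in one stroke by citing \cite[Corollary~2.5]{OHH2000}: once $\bar S/I(\mcC')^t$ is an algebra retract of $S/I(\mcC)^t$ (equivalently $I(\mcC')^t=I(\mcC)^t\cap \bar S$ together with the section $\pi$), that result gives $\beta^{\bar S}_{ij}(\bar S/I(\mcC')^t)\le\beta^S_{ij}(S/I(\mcC)^t)$ directly, without ever comparing $\bar S$- and $S$-Betti numbers of the larger quotient.
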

\begin{proof}
    Let $\mcC'$ be a subcorner-interval minors of $\mcC$ and $S_{\mcC'}=K[x_{ij} \mid x_{ij} \in \mcC']$ be a polynomial ring. First, we claim that $I(\mcC')^t=I(\mcC)^t \cap S_{\mcC'}$, for all $t \geq 1$, where $I(\mcC')$ is the binomial ideal of $\mcC'$ in the ring $S_{\mcC'}$. Since generators of the ideal $I(\mcC')^t$ contained in the ideal $I(\mcC)^t$, it follows that $I(\mcC')^t \subseteq I(\mcC)^t \cap S_{\mcC'}$. For the other inclusion, consider the map $\psi: S \rightarrow S_{\mcC'}$ by setting $\psi(x_{ij})=0$ if $x_{ij} \notin V(\mcC')$ and $\psi(x_{ij})=x_{ij}$ if $x_{ij} \in V(\mcC')$. Let $g=\sum_{\delta_{1},\ldots,\delta_t \in \mcC}a_{\delta_{1},\ldots,\delta_t}f_{\delta_1}\cdots f_{\delta_t}$ be an element in $I(\mcC)$, where $a_{\delta_{1},\ldots,\delta_t} \in S$. Then, we have 
    \begin{equation*}
    \begin{split}
        \psi(g) &=\sum_{\delta_{1},\ldots,\delta_t \in \mcC}\psi(a_{\delta_{1},\ldots,\delta_t})\psi(f_{\delta_1}\cdots f_{\delta_t})\\
        &=\sum_{\delta_{1},\ldots,\delta_t \in \mcC'}\psi(a_{\delta_{1},\ldots,\delta_t})f_{\delta_1}\cdots f_{\delta_t} \in I(\mcC').
    \end{split}
    \end{equation*}
    Therefore, we have  $I(\mcC)\cap S_{\mcC'} \subseteq I(\mcC')$. Then the statement follows from \cite[Corollary 2.5]{OHH2000} that if $S_{\mcC'}/I(\mcC')^t$ is an algebra retract of ${S}/{I(\mcC)^t}$.
      Consider, ${S_{\mcC'}}/{I(\mcC')^t} \xhookrightarrow{i} {S}/{I(\mcC)^t} \xrightarrow{\bar{\psi}} {S_{\mcC'}}/{I(\mcC')^t}$, where $\bar{\psi}$ is an induced by the map $\phi$. Then one can see that $\bar{\psi} \circ i$ is identity on ${S_{\mcC'}}/{I(\mcC')^t}$ as desired.
\end{proof}

\begin{remark}
    The above result also holds when maximal vertical intervals are considered in place of maximal horizontal intervals. In this case, a lower bound can be obtained by applying Theorem \ref{thm.regc} after removing the appropriate intervals.
\end{remark}

\vspace{2mm}

\noindent {\bf Acknowledgement.}
The author is supported by the Scientific and Technological Research Council of Turkey T\"UB\.{I}TAK under the Grant No: 124F113, and thankful to T\"UB\.{I}TAK for their supports. The author would like to thank Ayesha Asloob Qureshi for
the helpful discussions.

\end{document}